\documentclass[a4paper]{article}

\usepackage[pages=all, color=black, position={current page.south}, placement=bottom, scale=1, opacity=1, vshift=5mm]{background}
\SetBgContents{}

\usepackage[margin=1in]{geometry} 

\usepackage{amsmath}
\usepackage{amsthm}
\usepackage{amssymb}
\usepackage{mathrsfs} 


\usepackage{amsmath,amsfonts,bm}







\def\vzero{{\bm{0}}}

\def\ve{{\bm{e}}}

\def\vg{{\bm{g}}}

\def\vn{{\bm{n}}}

\def\vq{{\bm{q}}}

\def\vv{{\bm{v}}}

\def\vx{{\bm{x}}}


\def\mA{{\bm{A}}}

\def\mF{{\bm{F}}}

\def\mI{{\bm{I}}}

\def\mP{{\bm{P}}}

\def\mR{{\bm{R}}}

\def\mU{{\bm{U}}}
\def\mV{{\bm{V}}}

\def\mX{{\bm{X}}}

\DeclareMathAlphabet{\mathsfit}{\encodingdefault}{\sfdefault}{m}{sl}
\SetMathAlphabet{\mathsfit}{bold}{\encodingdefault}{\sfdefault}{bx}{n}

\def\gA{{\mathcal{A}}}

\def\gC{{\mathcal{C}}}
\def\gD{{\mathcal{D}}}

\def\gG{{\mathcal{G}}}

\def\gI{{\mathcal{I}}}

\def\gO{{\mathcal{O}}}

\def\gS{{\mathcal{S}}}
\def\gT{{\mathcal{T}}}


\def\sC{{\mathbb{C}}}


\def\sN{{\mathbb{N}}}

\def\sR{{\mathbb{R}}}







\newcommand{\argmin}{\mathop{\operatorname{argmin}}}
\newcommand{\argmax}{\mathop{\operatorname{argmax}}}

\newcommand{\HT}{{\mathrm{H}}} 


\newcommand{\distortho}{{\mathrm{d}}_{\orthgroup/\gG}}
\newcommand{\distsph}{{\mathrm{d}}_{\sph/\gG}}
\newcommand{\distort}{{\mathrm{d}}_{\orthgroup}}
\newcommand{\distsp}{{\mathrm{d}}_{\sph}}


\DeclareMathOperator{\Tr}{Tr}
\DeclareMathOperator{\Mean}{Mean}
\DeclareMathOperator{\Var}{Var}
\DeclareMathOperator{\Conv}{Conv}
\newcommand{\SO}[1]{\mathrm{SO}(#1)}
\newcommand{\orthgroup}{{\SO{3}}} 
\newcommand{\sph}{{\mathrm{S}^2}} 


\usepackage[utf8]{inputenc}
\usepackage{hyperref}
\hypersetup{
	unicode,
	pdfproducer={LaTeX},
	pdfcreator={pdflatex}
}

\theoremstyle{plain}
\newtheorem{theorem}{Theorem}

\newtheorem{proposition}[theorem]{Proposition}

\theoremstyle{definition}

\usepackage{graphicx, color}
\graphicspath{{figures/}}

\usepackage{algorithm, algpseudocode} 

\usepackage{booktabs}
\usepackage{multirow}
\usepackage{multicol}
\usepackage{lscape}

\title{Averaging Orientations with Molecular Symmetry in Cryo-EM}
\author{
	Qi Zhang$^1$ $^2$ $^3$ $^4$ \and
	Chenglong Bao$^5$ $^6$ \thanks{Chenglong Bao is supported by the National Key R\&D Program of China (No.2021YFA1001300), National Natural Science Foundation of China (No.12271291), Tsinghua University Initiative Scientific Research Program. (clbao@mail.tsinghua.edu.cn)} \and
	Hai Lin$^7$ $^8$ \thanks{Hai Lin is supported by the National Key R\&D Program of China (No. 2020YFA0713000) and grant TH-533310008 of Tsinghua University. (hailinhl@seu.edu.cn, hailin@mail.tsinghua.edu.cn)} \and
	Mingxu Hu$^9$ $^4$ $^3$ \thanks{Mingxu Hu is supported by Shenzhen Medical Academy of Research and Translation, Beijing Frontier Research Center for Biological Structure (Tsinghua University) and Advanced Innovation Center for Structural Biology (Tsinghua University). (humingxu@mail.tsinghua.edu.cn)}}
\date{
	$^1$Key Laboratory of Protein Sciences (Tsinghua University), Ministry of Education, Beijing, China \\
	$^2$School of Life Science, Tsinghua University, Beijing, China \\
	$^3$Beijing Advanced Innovation Center for Structural Biology (Tsinghua University), Beijing, China \\
	$^4$Beijing Frontier Research Center for Biological Structure (Tsinghua University), Beijing, China \\
	$^5$Yau Mathematical Sciences Center, Tsinghua University, Beijing, China \\
	$^6$Yanqi Lake Beijing Institute of Mathematical Sciences and Applications, Beijing, China \\
	$^7$Shing-Tung Yau Center of Southeast University, Southeast University, Nanjing, China \\
	$^8$School of Mathematics, Southeast University, Nanjing, China \\
    $^9$Institute of Bio-Architecture and Bio-Interactions (IBABI), Shenzhen Medical Academy of Research and Translation, Shenzhen, China
}

\begin{document}

\maketitle

\begin{abstract}
Cryogenic electron microscopy (cryo-EM) is an invaluable technique for determining high-resolution three-dimensional structures of biological macromolecules using transmission particle images. The inherent symmetry in these macromolecules is advantageous, as it allows each image to represent multiple perspectives. However, data processing that incorporates symmetry can inadvertently average out asymmetric features. Therefore, a key preliminary step is to visualize 2D asymmetric features in the particle images, which requires estimating orientation statistics under molecular symmetry constraints. Motivated by this challenge, we introduce a novel method for estimating the mean and variance of orientations with molecular symmetry. Utilizing tools from non-unique games, we show that our proposed non-convex formulation can be simplified as a semi-definite programming problem. Moreover, we propose a novel rounding procedure to determine the representative values. Experimental results demonstrate that the proposed approach can find the global minima and the appropriate representatives with a high degree of probability. We release the code of our method as an open-source Python package named pySymStat. Finally, we apply pySymStat to visualize an asymmetric feature in an icosahedral virus, a feat that proved unachievable using the conventional 2D classification method in RELION. \\

\noindent\textbf{Keywords:} Cryo-EM, orientation estimation, averaging over $\orthgroup$ and $\sph$, molecular symmetry, non-unique games \\

\noindent\textbf{AMS subject classifications:} 65K05, 90C26, 65Z05
\end{abstract}

\section{Introduction}

Structural biology, investigating the three-dimensional structures of biological macromolecules, offers direct observations that facilitate insights into the structures and functions of these macromolecules. Among various imaging techniques, cryogenic electron microscopy (cryo-EM) has emerged as a leading tool in structural biology. It allows the determination of near-atomic resolution 3D structures of biological macromolecules in a relatively cost-effective and time-efficient manner~\cite{Cheng_2018_SPR_Review}. Such has been the impact of cryo-EM, that it was selected as the ``Method of the Year 2015'' by \textsl{Nature Methods}, and three pioneers in the field were awarded the Nobel Prize in Chemistry in 2017.

The primary steps in cryo-EM include sample preparation, image processing, and atomic model building. During the sample preparation phase, solutions containing target biological macromolecules are rapidly frozen to produce amorphous thin films, a process known as vitrification. Images are captured using transmission electron microscopy (TEM). From these images, individual particle images, each containing a target biological macromolecule, are extracted. Subsequently, using the gathered 2D particle images, cryo-EM image processing seeks to reconstruct high-resolution 3D density maps. These maps are then utilized to build atomic models of the target biological macromolecule. Mathematically, let $X:\mathbb{R}^3\to\mathbb{R}$ be the 3D density map to be estimated and $\{I_i:\mathbb{R}^2\to\mathbb{R}\}_{i=1}^N$ be a set of $N$ transmission particle images (also known as samples), the physical model in cryo-EM is represented by~\cite{scheres_relion_2012}
\begin{equation}\label{2Dprojections}
    I_i = h_i \ast T_i \circ P_z(\mR_i X) + n_i, \quad i = 1,2,\cdots,N,
\end{equation}
where $h_i$ represents the contrast transfer function (CTF) resulting from the electron microscope's lens system~\cite{CTFFIND4_2015}. The symbol $\ast$ denotes the convolution operator, $T_i$ stands for the in-plane translation operator, and $P_z$ is the projection operator along the $z$-axis. The term $\mR_i$ from $\orthgroup = \{\mR \in \mathbb{R}^{3\times 3} | \mR^\top\mR = I, \det(\mR) = 1\}$ corresponds to the pose of the $i$-th particle, and $n_i$ is the noise. Specifically, the CTF describes how the electron microscope optics modulate the image contrast across various spatial frequencies. While it is accurately determined during the preliminary step of the cryo-EM image processing workflow, it is not flawless. Pose estimation pertains to deducing the 3D orientation of particles within cryo-EM images, encompassing three Euler angles and two translational parameters. These are denoted as $\mR_i$ and $T_i$ in the model, respectively. Based on the estimated parameters, the 3D density map $X$ can be derived by solving \eqref{2Dprojections}. Typically, cryo-EM image processing operates in an iterative cycle between parameter estimation and reconstruction. Even with the emergence of cryo-EM image processing software like RELION~\cite{scheres_relion_2012}, CryoSPARC~\cite{punjani_cryosparc_2017}, and cisTEM~\cite{grant_cistem_2018}, image processing remains challenging due to the extremely low signal-to-noise ratio. Among them, we introduce an unsolved computational issue in cryo-EM in the following context.

\textbf{Symmetry mismatch issue.} Many biological molecules exhibit inherent symmetry. For example, viruses often possess icosahedral symmetry. Such symmetry can be leveraged to enhance reconstruction by averaging symmetry-related views. However, this approach assumes absolute symmetry and averages out any asymmetric features, giving rise to what is termed the symmetry mismatch issue~\cite{li_symmetry-mismatch_2016}. The structural study of icosahedral viruses exemplifies this issue well~\cite{liu_cryo-em_2015}. Asymmetric structural elements, including the genome, minor structural proteins, and interactions with the host during the viral life cycle, are averaged out. These elements are pivotal to processes like viral infection, replication, assembly, and transmission~\cite{lee_transferrin_2019, stevens_asymmetric_2023}. Hence, gleaning detailed insights into these asymmetric features is paramount for a comprehensive understanding of viral behaviors. To elaborate on the symmetry mismatch issue, we define $\gG\subset\orthgroup$ to be a molecular symmetry group. The density map $X$ can be expressed as $X = X_{asym} + X_{sym}$, with $X_{asym}$ representing the asymmetric component and $X_{sym}$ denoting the symmetric component such that $\vg X_{sym} = X_{sym}$ for all $\vg \in \gG$. For each particle image $I_i$, current algorithms determine its pose parameters $\mR_i$ by
\begin{equation}\label{eqn:loss}
    \min_{\mR_i\in\mathrm{SO}(3)} \mathrm{loss}(\mathcal{A}(X;\mR_i), I_i) = \mathrm{loss}(\mathcal{A}(X_{sym};\mR_i)+\mathcal{A}(X_{asym};\mR_i), I_i)
\end{equation}
where $\mathcal{A}$ is the forward linear operator in cryo-EM parameterized by $\mR_i$ and $\mathrm{loss}$ is the loss function (See more details in~\cite{scheres_relion_2012, punjani_cryosparc_2017, grant_cistem_2018, hu_particle-filter_2018}). Since $X_{sym}$ is the dominant part in $X$, the strength of $\mathcal{A}(X_{asym};\mR_i)$ is negligible in compassion with the first term and the high noise in $I_i$, and we have
\begin{equation}
    \begin{aligned}
    \mathrm{loss}(\mathcal{A}(X_{sym};\mR_i)+\mathcal{A}(X_{asym};\mR_i), I_i) & \approx\mathrm{loss}(\mathcal{A}(X_{sym};\mR_i), I_i) \\
    & = \mathrm{corr}(\mathcal{A}(X_{sym};\mR_i\vg), I_i),~\forall \vg\in\mathcal{G}.
    \end{aligned}
\end{equation}
In practice, we can not distinguish the elements in $[\mR_i]$. To find the correct pose, we have to find a unique $\vg_i \in \mathcal{G}$ such that $\mR_i \vg_i$ is the real pose parameter of $X$, which is a challenging task known as the symmetry mismatch issue. In scenarios involving symmetry mismatch, our focus is on determining the set \(\{\vg_i\} \subset \gG\), assuming \(\{[\mR_i]\}\) are given.

\textbf{Pre-step: 2D asymmetric feature visualization.} Before solving this, visualizing asymmetric features by averaging images $I_i$ within a cluster is important. This helps determine, for instance,  whether the asymmetric feature genuinely exists in the samples $\{I_i\}$, whether it has been adequately stabilized, and if the asymmetric feature exhibits specific positional characteristics (such as binding to the 2-fold axis, 3-fold axis, or 5-fold axis of an icosahedral virus). In other words, the visualization process needs to cluster ${I_i}$ based on the given rotation matrices $\{[\mR_i]\}$ and the symmetric property $\gG$ in $X_{sym}$, then average within each cluster. Similar to the previously defined $[\mR_i]$, let $[\vn_i] = \{\vg^\top\vn_i \mid \vg \in \gG\} \in \sph/\gG$, where \(\vn_i \in \sph\), be the projection direction corresponding to \(\mR_i\). Here, \(\sph\) refers to the unit sphere in \(\mathbb{R}^3\), defined as \(\sph = \{\vn \mid \|\vn\|_2 = 1\}\). The computational challenge in the clustering step is: \emph{How can we compute the mean or variance of \(\{[\mR_i]\}\) and \(\{[\vn_i]\}\) on the quotient manifold \(\orthgroup/\gG\) and \(\sph/\gG\), respectively?}

The above problem relates to a challenging discrete optimization problem, posing significant computational difficulties. Motivated by this problem, our main contributions are summarized as follows:
\begin{itemize}
    \item  We first approximate the variance calculation by using the pairwise distance of spatial rotations and projections, which requires computing the proper representatives $\vg_i\in\gG$ for $\mR_i$ and $\vn_i$, $i=1,2,\ldots, N$. Moreover, we establish the approximation error between the original and the approximated version if the distances in $\orthgroup/\gG$ and $\sph/\gG$ are induced from the corresponding Euclidean norm.
    \item Since the approximated version is still non-convex, we provide a convex relaxation for estimating the empirical mean and variance on $\orthgroup/\gG$ and $\sph/\gG$ using the non-unique games (NUG) framework~\cite{Singer_2015_NUG, Lederman_2016} and representation theory of $\gG$. Additionally, we propose a new rounding algorithm to obtain the final solution and have released an open-source Python package, pySymStat (\url{https://github.com/mxhulab/pySymStat}).
    \item Extensive results on various molecular symmetry groups $\gG$ (including cyclic group $\gC_n$, dihedral group $\gD_n$, tetrahedral group $\gT$, octahedral group $\gO$ and icosahedral group $\gI$) demonstrate that our method achieves the global optimum with high probability. Finally, we applied pySymStat to visualize the 2D asymmetric feature in an icosahedral virus in a synthetic dataset, a feat that proved unachievable using 2D classification in RELION.
\end{itemize}

\section{Problem formulation}

We first give the distance on $\orthgroup/\gG$ and $\sph/\gG$ and then formulate the mathematical optimization models related to the mean and variance estimations on the above two quotient manifolds. At the end of this section, we present the corresponding non-convex relaxations and establish the relationship to the original problem.

\subsection{Distances of quotient manifolds}

Let $\distort$ be a distance on $\orthgroup$ and $\distsp$ be a distance on $\sph$. Assume that $\distort$ and $\distsp$ are $\orthgroup$-invariant, i.e.,
\begin{equation}
\begin{aligned}
    & \distort(\mR_1,\mR_2) = \distort(\mR_1\mR,\mR_2\mR), \quad \forall\mR_1,\mR_2,\mR\in\orthgroup, \\
    & \distsp(\vn_1,\vn_2) = \distsp(\mR^\top\vn_1,\mR^\top\vn_2), \quad \forall\vn_1,\vn_2\in\sph,\forall\mR\in\orthgroup.
\end{aligned}
\end{equation}
We define the distances on quotient manifolds $\orthgroup/\gG$ and $\sph/\gG$ as
\begin{equation}
\label{dist:quotient}
\begin{aligned}
    \distortho([\mR_1],[\mR_2]) & = \min_{\vg_1,\vg_2\in\gG}\distort(\mR_1\vg_1,\mR_2\vg_2) = \min_{\vg\in\gG}\distort(\mR_1\vg,\mR_2), \\
    \distsph([\vn_1],[\vn_2]) & = \min_{\vg_1,\vg_2\in\gG}\distsp(\vg_1^\top\vn_1,\vg_2^\top\vn_2) = \min_{\vg\in\gG}\distsp(\vg^\top\vn_1,\vn_2),
\end{aligned}
\end{equation}
where the last equalities of the above two formulas follow immediately from the $\orthgroup$-invariance. This statement is proved in Section 1 of the supplementary material, where a similar proof can be found in \cite{Functional_and_Shape_Data_Analysis}.

Now, we are ready to show the well-definedness of \eqref{dist:quotient}.
\begin{proposition}
The following two statements hold:
\begin{itemize}
    \item The function $\distortho$ defined in \eqref{dist:quotient} gives a distance on $\orthgroup/\gG$.
    \item The function $\distsph$ defined in \eqref{dist:quotient} gives a distance on $\sph/\gG$.
\end{itemize}
\end{proposition}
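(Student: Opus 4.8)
The plan is to verify the three defining axioms of a metric — non-negativity with identity of indiscernibles, symmetry, and the triangle inequality — for $\distortho$ on $\orthgroup/\gG$, and then observe that the argument for $\distsph$ on $\sph/\gG$ is formally identical, so only one case need be written in full. Throughout I would use the two simplified expressions from \eqref{dist:quotient}, namely $\distortho([\mR_1],[\mR_2]) = \min_{\vg\in\gG}\distort(\mR_1\vg,\mR_2)$, and I would repeatedly invoke the $\orthgroup$-invariance of $\distort$ together with the fact that $\gG$ is a finite (hence the minimum is attained) subgroup of $\orthgroup$ closed under inversion.

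The first step is well-definedness: the value must not depend on the choice of coset representatives $\mR_1, \mR_2$. If $\mR_i' = \mR_i\vh_i$ with $\vh_i\in\gG$, then $\min_{\vg\in\gG}\distort(\mR_1\vh_1\vg,\mR_2\vh_2) = \min_{\vg\in\gG}\distort(\mR_1\vh_1\vg\vh_2^{-1},\mR_2)$ by right-invariance, and as $\vg$ ranges over $\gG$ so does $\vh_1\vg\vh_2^{-1}$, giving the same minimum. Non-negativity is inherited from $\distort$. For the identity of indiscernibles, $\distortho([\mR_1],[\mR_2]) = 0$ forces $\distort(\mR_1\vg,\mR_2)=0$ for some $\vg$, hence $\mR_1\vg = \mR_2$, i.e. $[\mR_1]=[\mR_2]$; the converse is immediate. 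Symmetry uses that $\gG$ is closed under inversion: $\distort(\mR_1\vg,\mR_2) = \distort(\mR_1, \mR_2\vg^{-1}) = \distort(\mR_2\vg^{-1},\mR_1)$ by invariance and symmetry of $\distort$, and substituting $\vg\mapsto\vg^{-1}$ in the minimum exchanges the roles of $[\mR_1]$ and $[\mR_2]$.

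The triangle inequality is the only step with any real content, and it is the one I would expect to be the main obstacle, though a mild one. Let $\vg_{12}$ and $\vg_{23}$ attain the minima in $\distortho([\mR_1],[\mR_2])$ and $\distortho([\mR_2],[\mR_3])$ respectively. The trick is to route through a common representative of $[\mR_2]$: by right-invariance, $\distort(\mR_1\vg_{12},\mR_2) = \distort(\mR_1\vg_{12}\vg_{23},\mR_2\vg_{23})$, and then
\[
\distortho([\mR_1],[\mR_3]) \le \distort(\mR_1\vg_{12}\vg_{23},\mR_3) \le \distort(\mR_1\vg_{12}\vg_{23},\mR_2\vg_{23}) + \distort(\mR_2\vg_{23},\mR_3) = \distortho([\mR_1],[\mR_2]) + \distortho([\mR_2],[\mR_3]),
\]
using the triangle inequality for $\distort$ in the middle and $\vg_{12}\vg_{23}\in\gG$ to bound the quotient distance by a particular value in the first inequality.

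Finally, for $\distsph$ the same four steps go through verbatim after replacing right translation $\mR\mapsto\mR\vg$ by the left action $\vn\mapsto\vg^\top\vn$: $\gG$-invariance plus closure under inversion handle well-definedness and symmetry, separation of points on $\sph$ handles the identity of indiscernibles, and the common-representative trick handles the triangle inequality. I would remark that $\orthgroup$-invariance of $\distsp$ is exactly what licenses the second equality in the definition of $\distsph$ and makes the left-action version of the argument identical to the right-translation one.
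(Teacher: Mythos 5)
Your proposal is correct and follows essentially the same route as the paper: verify the metric axioms directly, with symmetry and positive definiteness inherited from $\distort$ (resp.\ $\distsp$) and the triangle inequality obtained by using the $\orthgroup$-invariance to align the middle representative before invoking the triangle inequality on the base space — your version picks explicit minimizers $\vg_{12},\vg_{23}$, while the paper decouples a joint minimum over three group elements, which is the same idea in different clothing. The extra well-definedness check with respect to coset representatives is harmless additional thoroughness, and treating the second statement as formally identical matches the paper's handling.
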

\begin{proof}
We present the proof of the first statement in section 1 of the supplementary material. The proof of the second statement is similar and therefore omitted.
\end{proof}

There are two typical $\orthgroup$-invariant distances on $\orthgroup$. One is the arithmetic distance~\cite{Hu_2019}, i.e.,
\begin{equation*}
    \distort^A(\mR_1,\mR_2) = \|\mR_1-\mR_2\|_F,
\end{equation*}
where $\|\cdot\|_F$ is the Frobenius norm. The other is geometric distance~\cite{Hu_2019}, that is
\begin{equation*}
    \distort^G(\mR_1,\mR_2) = \max_{\vv\in\sph}\{\cos^{-1}(\mR_1\vv\cdot\mR_2\vv)\}.
\end{equation*}

Similarly, we can consider the arithmetic distance $\distsp^A$ and geometric distance $\distsp^G$ on $\sph$, defined as
\begin{equation*}
    \distsp^A(\vn_1,\vn_2) = \|\vn_1-\vn_2\|_2,\quad\quad \distsp^G(\vn_1,\vn_2) = \cos^{-1}(\vn_1\cdot\vn_2),
\end{equation*}
where $\|\cdot\|_2$ is the $\ell_2$-norm of a vector. In the next part, we use $\distortho^A, \distortho^G$ to be distances induced by $\distort^A, \distort^G$ respectively and $\distsph^A, \distsph^G$ to be the distances induced by $\distsp^A, \distsp^G$ respectively.

\subsection{Mean and variance on quotient manifolds}

Let $\{[\mR_i]\}_{i=1}^N$ and $\{[\vn_i]\}_{i=1}^N$ be a set of spatial rotations and projection directions respectively, we define the following two problems
\begin{align}
    & \min_{[\mR]\in\orthgroup/\gG}\frac{1}{N}\sum_{i=1}^N\distortho([\mR],[\mR_i])^2, \label{formulation:rotation} \\
    & \min_{[\vn]\in\sph/\gG}\frac{1}{N}\sum_{i=1}^N\distsph([\vn],[\vn_i])^2. \label{formulation:projection}
\end{align}
The mean and variance of spatial rotations with molecular symmetry $\gG$, denoted as $\Mean(\{[\mR_i]\})$ and $\Var(\{[\mR_i]\})$, are the optimal solution and optimal value of \eqref{formulation:rotation}, respectively. Similarly, $\Mean(\{[\vn_i]\})$ and $\Var(\{[\vn_i]\})$ are defined to be the optimal solution and optimal value of \eqref{formulation:projection}, respectively.

By \eqref{dist:quotient} and defining
\begin{align}
    & L^\orthgroup(\vg_1,\vg_2,\cdots,\vg_N) := \min_{\mR\in\orthgroup}~\left\{\frac{1}{N}\sum_{i=1}^N\distort(\mR,\mR_i\vg_i)^2\right\}, \label{L-orthgroup}\\
    & L^\sph(\vg_1,\vg_2,\cdots,\vg_N) := \min_{\vn\in\sph}~\left\{\frac{1}{N}\sum_{i=1}^N\distsp(\vn,\vg_i^\top\vn_i)^2\right\}, \label{L-sph}
\end{align}
\eqref{formulation:rotation} and \eqref{formulation:rotation} can be simplified to
\begin{align}
    & \min_{\vg_1,\vg_2,\cdots,\vg_N\in\gG}L^\orthgroup(\vg_1,\vg_2,\cdots,\vg_N), \label{formulation:rotation-new}\\
    & \min_{\vg_1,\vg_2,\cdots,\vg_N\in\gG}L^\sph(\vg_1,\vg_2,\cdots,\vg_N).\label{formulation:projection-new}
\end{align}
Once the optimal representatives are obtained, we can determine the mean of spatial rotations and projection via minimizing \eqref{L-orthgroup} and \eqref{L-sph}. However, due to the discrete nature in \eqref{formulation:rotation-new} and \eqref{formulation:projection-new}, directly solving them is challenging. Thus, we would like to solve them approximately.

The approximated version of $L^\orthgroup$ and $L^\sph$ is made based on the following observation in Euclidean space. Let $\{\vx_i\}\subseteq\mathbb{R}^n$ be a set of points, the empirical mean $\bar \vx$ is
\begin{equation}
    \bar \vx := \Mean(\{\vx_i\}) = \argmin_{\vx} \frac{1}{N}\sum_{i=1}^N\|\vx-\vx_i\|_2^2 = \frac{1}{N}\sum_{i=1}^N\vx_i
\end{equation}
and the variance ${\rm Var}(\{\vx_i\})$ is
\begin{equation}\label{variance:Euclidean}
    {\rm Var}(\{\vx_i\}) = \min_{\vx} \frac{1}{N}\sum_{i=1}^N\|\vx-\vx_i\|_2^2 = \frac{1}{N}\sum_{i=1}^N\|\bar\vx - \vx_i\|_2^2 = \frac{1}{2N^2}\sum_{i,j=1}^N\|\vx_i-\vx_j\|_2^2.
\end{equation}
Equation \eqref{variance:Euclidean} implies that the variance can be obtained via the pairwise distance of $\{\vx_i\}_{i=1}^N$, without solving $\min \frac{1}{N}\sum_{i=1}^N\|\vx-\vx_i\|_2^2$. Thus, we generalize this idea to $\orthgroup$ and $\sph$, i.e., we approximate the $L^\orthgroup$ and $L^\sph$ via the pairwise distance
\begin{align}
    & \tilde{L}^\orthgroup(\vg_1,\vg_2,\cdots,\vg_N) = \frac{1}{2N^2} \sum_{i,j=1}^N\distort(\mR_i\vg_i,\mR_j\vg_j)^2, \label{L-orthgroup:approx}\\
    & \tilde{L}^\sph(\vg_1,\vg_2,\cdots,\vg_N) = \frac{1}{2N^2}\sum_{i,j=1}^N\distsp(\vg_i^\top\vn_i,\vg_j^\top\vn_j)^2. \label{L-sph:approx}
\end{align}
In this case, the problems \eqref{formulation:rotation-new} and \eqref{formulation:projection-new} have the approximations:
\begin{align}
    & \min_{\vg_1,\vg_2,\cdots,\vg_N\in\gG}~\tilde{L}^\orthgroup(\vg_1,\vg_2,\cdots,\vg_N) \label{formulation:rotation-approx}\\
    & \min_{\vg_1,\vg_2,\cdots,\vg_N\in\gG}~\tilde{L}^\sph(\vg_1,\vg_2,\cdots,\vg_N).\label{formulation:projection-approx}
\end{align}
Once the solution of \eqref{formulation:rotation-approx} and \eqref{formulation:projection-approx}, denoted as $\{\vg_i^{\orthgroup}\}$ and $\{\vg_i^{\sph}\}$ respectively, are obtained, the variances are estimated via
\begin{align}
    & \widetilde{\Var}(\{[\mR_i]\}) = L^\orthgroup(\vg_1^{\orthgroup},\vg_2^{\orthgroup},\cdots,\vg_n^{\orthgroup}), \label{approx:var-rotation}\\
    & \widetilde{\Var}(\{[\vn_i]\}) = L^\sph(\vg_1^{\sph},\vg_2^{\sph},\cdots,\vg_n^{\sph}), \label{approx:var-projection}
\end{align}
respectively, and the means are also estimated as
\begin{align}
    & \widetilde{\Mean}(\{[\mR_i]\}) = [\Mean\{\mR_i\vg_i^{\orthgroup}\}], \\
    & \widetilde{\Mean}(\{[\vn_i]\}) = [\Mean\{(\vg_i^{\sph})^\top\vn_i\}],
\end{align}
where $\Mean$ in the right hand side is the ordinary mean on $\orthgroup$ and $\sph$, without considering molecular symmetry, which can be solved by existing approaches~\cite{Horn_1987, Directional_Statistics}. Consequently, the computational bottleneck is solving \eqref{formulation:rotation-approx} and \eqref{formulation:projection-approx}, which are generally challenging. Thanks to the recently developed non-unique games~(NUG) framework~\cite{Singer_2015_NUG}, we relax~\eqref{formulation:rotation-approx} and \eqref{formulation:projection-approx} to positive semi-definite programming that existing convex optimization methods can optimize. Before presenting the numerical algorithm for solving \eqref{formulation:rotation-approx} and \eqref{formulation:projection-approx}, we give more explanations that show the rationality of our approximation.

\subsection{The analysis of the approximated model}

In this section, choosing the arithmetic distances in $\orthgroup$ and $\sph$, we analyze the errors of the approximated mean and variance, which are summarized as the next two theorems.

\begin{theorem}\label{thm:sph-approx}
Given $\{\vn_i\}_{i=1}^N\subset\sph$ and assume $\distsp=\distsp^A$, we have
\begin{equation}
    \tilde{L}^\sph(\vg_1,\cdots,\vg_N) = f(L^\sph(\vg_1,\cdots,\vg_N)),\quad\forall\vg_1,\cdots,\vg_N\in\gG,
\end{equation}
where $f(x)=x-\frac{1}{4}x^2$. In particular,
\begin{equation}
\label{projection-equivalence}
    \Mean(\{[\vn_i]\}) = \widetilde{\Mean}(\{[\vn_i]\}),\quad \Var(\{[\vn_i]\}) = \widetilde{\Var}(\{[\vn_i]\}).
\end{equation}
\end{theorem}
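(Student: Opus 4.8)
Under the hypothesis $\distsp = \distsp^A$, the plan is to express both $\tilde{L}^\sph$ and $L^\sph$ in closed form as functions of the single scalar $\|\bar\vm\|_2$, where $\vm_i := \vg_i^\top\vn_i \in \sph$ and $\bar\vm := \frac1N\sum_{i=1}^N\vm_i$ is the ordinary Euclidean average of the chosen representatives, and then eliminate that scalar. Concretely, I would first establish the two identities
\begin{equation*}
    \tilde{L}^\sph(\vg_1,\dots,\vg_N) = 1 - \|\bar\vm\|_2^2,\qquad L^\sph(\vg_1,\dots,\vg_N) = 2 - 2\,\|\bar\vm\|_2 .
\end{equation*}
The first follows by expanding $\|\vm_i-\vm_j\|_2^2 = 2 - 2\,\vm_i\cdot\vm_j$ (using $\|\vm_i\|_2 = 1$), summing over $i,j$, and noting $\sum_{i,j}\vm_i\cdot\vm_j = \bigl\|\sum_i\vm_i\bigr\|_2^2 = N^2\|\bar\vm\|_2^2$. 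The second follows by expanding $\frac1N\sum_i\|\vn-\vm_i\|_2^2 = 2 - 2\,\vn\cdot\bar\vm$ for $\vn\in\sph$ and using $\max_{\vn\in\sph}\vn\cdot\bar\vm = \|\bar\vm\|_2$, attained at $\vn=\bar\vm/\|\bar\vm\|_2$ when $\bar\vm\neq\vzero$. Substituting $\|\bar\vm\|_2 = 1-\tfrac12 L^\sph$ into the first identity then yields $\tilde{L}^\sph = 1-(1-\tfrac12 L^\sph)^2 = L^\sph - \tfrac14(L^\sph)^2 = f(L^\sph)$; this is the spherical analogue of the Euclidean identity \eqref{variance:Euclidean}, the quadratic correction $-\tfrac14 x^2$ accounting for the minimization in $L^\sph$ being constrained to $\sph$ rather than to $\mathbb{R}^3$.

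For the consequence \eqref{projection-equivalence}, the key point is that $f$ is a strictly increasing reparametrisation on the relevant range. Since $0\le\|\bar\vm\|_2\le\frac1N\sum_i\|\vm_i\|_2 = 1$, the value $L^\sph(\vg_1,\dots,\vg_N)$ lies in $[0,2]$ for every choice $\vg_1,\dots,\vg_N\in\gG$, and on $[0,2]$ one has $f'(x)=1-\tfrac12 x\ge 0$ with strict inequality on $[0,2)$. Hence $\argmin_{\vg\in\gG^N}\tilde{L}^\sph(\vg) = \argmin_{\vg\in\gG^N}L^\sph(\vg)$ as subsets of $\gG^N$, so any solution $\{\vg_i^\sph\}$ of \eqref{formulation:projection-approx} is also a solution of \eqref{formulation:projection-new}. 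Combined with the reduction of \eqref{formulation:projection} to \eqref{formulation:projection-new} already recorded around \eqref{L-sph}--\eqref{formulation:projection-new}, this gives $\widetilde{\Var}(\{[\vn_i]\}) = L^\sph(\vg_1^\sph,\dots,\vg_N^\sph) = \Var(\{[\vn_i]\})$; moreover the $\vn$ realizing the inner minimum of $L^\sph$ at $\{\vg_i^\sph\}$ is the spherical mean $\Mean\{(\vg_i^\sph)^\top\vn_i\} = \bar\vm/\|\bar\vm\|_2$, whose class is simultaneously $\widetilde{\Mean}(\{[\vn_i]\})$ and an optimal $[\vn]$ for \eqref{formulation:projection}, that is $\Mean(\{[\vn_i]\})$.

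The derivation is elementary, so I do not anticipate a genuine obstacle; the two points needing care are (i) pinning down $L^\sph\in[0,2]$, which is exactly what upgrades ``equal objective values'' to ``equal minimizers'' and hence drives \eqref{projection-equivalence}, and (ii) the degenerate case $\sum_i\vm_i=\vzero$ (maximal variance), where the spherical mean is not unique: there $L^\sph=2$ and $\tilde{L}^\sph=1=f(2)$, and both $\Mean(\{[\vn_i]\})$ and $\widetilde{\Mean}(\{[\vn_i]\})$ are equally indeterminate, so \eqref{projection-equivalence} is to be read modulo the standard non-uniqueness of Fréchet means.
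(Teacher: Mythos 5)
Your proposal is correct and follows essentially the same route as the paper: both reduce $\tilde{L}^\sph$ and $L^\sph$ to closed-form functions of $\|\bar\vm\|_2$ (the paper's $\tilde\vn$), obtaining $\tilde{L}^\sph = 1-\|\bar\vm\|_2^2$ and $L^\sph = 2(1-\|\bar\vm\|_2)$, and then conclude via the monotonicity of $f(x)=x-\tfrac14 x^2$ on $[0,2]$. Your explicit treatment of the mean and of the degenerate case $\bar\vm=\vzero$ only fills in details the paper leaves implicit.
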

\begin{proof}
Write $L^\sph(\vg_1,\vg_2,\cdots,\vg_N)$ and $\tilde{L}^\sph(\vg_1,\vg_2,\cdots,\vg_N)$ as $L^\sph$ and $\tilde{L}^\sph$ for short. Let $\tilde\vn = \frac{1}{N}\sum_{i=1}^N\vg_i^\top\vn_i$. By \eqref{variance:Euclidean}, we know
\begin{equation*}
    \tilde{L}^\sph = \frac{1}{N} \sum_{i=1}^N\|\tilde\vn - \vg_i^\top\vn_i\|_2^2 = 1-\|\tilde\vn\|_2^2.
\end{equation*}
By direct calculation, it has
\begin{equation*}
    L^\sph = \frac{1}{N}\sum_{i=1}^N\|\frac{1}{\|\tilde\vn\|_2}\tilde\vn-\vg_i^\top\vn_i\|_2^2 = 2 (1-\|\tilde\vn\|_2).
\end{equation*}
Thus, we know
\begin{equation*}
    \tilde{L}^\sph = L^\sph - \frac{(L^\sph)^2}{4}.
\end{equation*}
Since $L^\sph\in[0,2]$ and $f$ is monotone increasing on $[0,2]$ we have $\argmin\limits_{\vg_1,\cdots,\vg_N\in\gG}L^\sph = \argmin\limits_{\vg_1,\cdots,\vg_N\in\gG}\tilde{L}^\sph$, which completes the proof.
\end{proof}

\begin{figure}[htbp]
    \centering
    \includegraphics[width=\textwidth]{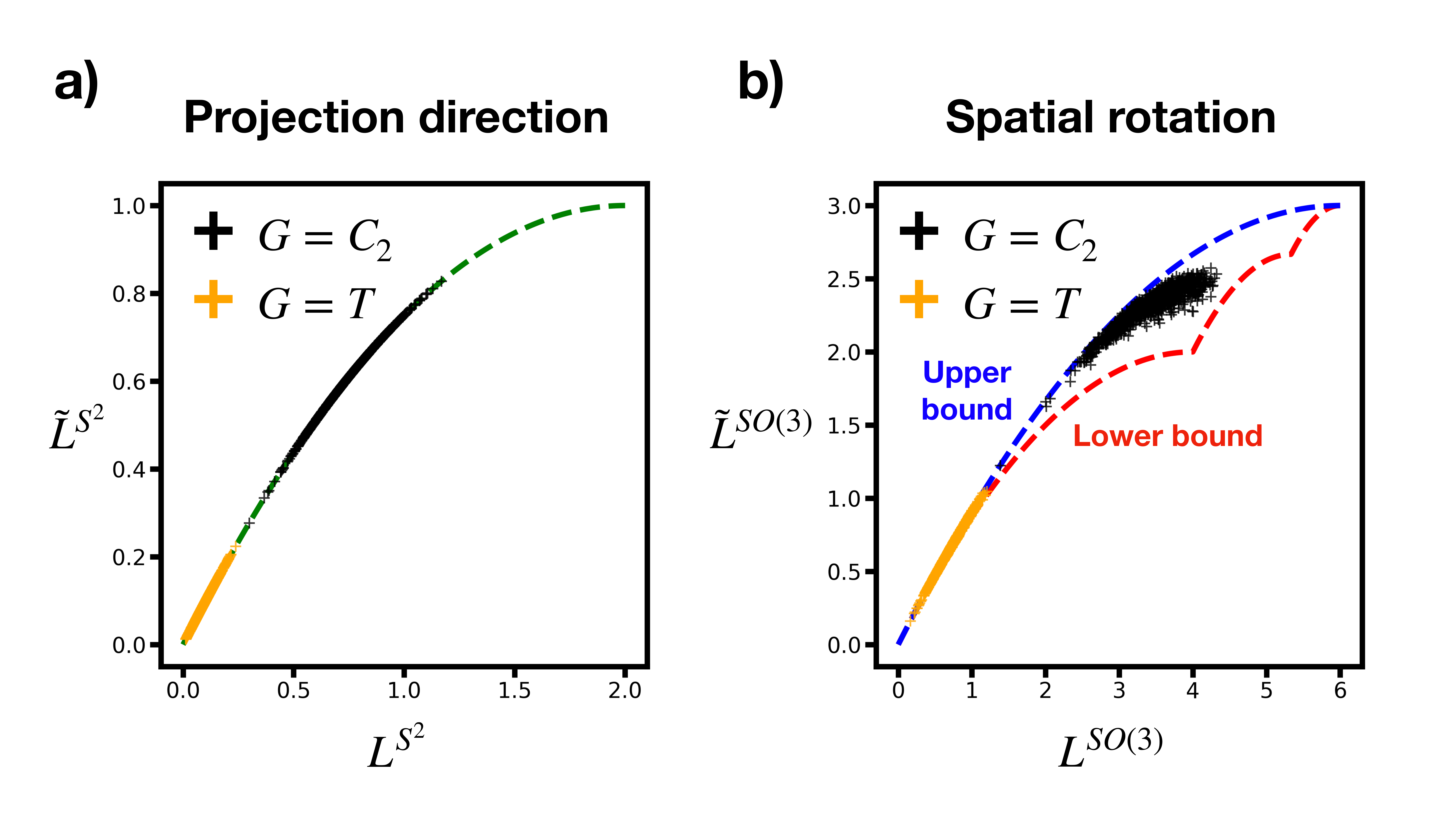}
    \caption{\textbf{The analysis of the approximated model.} \textbf{a}, The green curve is the theoretical relation between $L^{\sph}$ and its approximation $\tilde{L}^{\sph}$ (see \textbf{Theorem} \ref{thm:sph-approx}). \textbf{b}, The red and blue curve are theoretical upper and bound of $\tilde{L}^{\orthgroup}$ with given $L^{\orthgroup}$ (see \textbf{Theorem} \ref{thm:ortho-approx}), respectively. The numerical experiments in the case $\gG=\gC_2,\gT$ are presented by black and orange crosses, respectively (see \textbf{Section} \ref{sec:simulation_experiments}).}
    \label{fig:range}
\end{figure}

\begin{theorem}\label{thm:ortho-approx}
Given $\{\mR_i\}_{i=1}^N\subset\orthgroup$ and assume $\distort = \distort^A$, we have
\begin{equation}
    f_1(L^\orthgroup(\vg_1,\cdots,\vg_N)) \leq \tilde{L}^\orthgroup(\vg_1,\cdots,\vg_N) \leq f_2(L^\orthgroup(\vg_1,\cdots,\vg_N))
\end{equation}
for all $\vg_1,\cdots,\vg_N\in\gG$, where
\begin{equation}\label{equivlance_rotations_bound}
    f_1(x) =
    \begin{cases}
        x-\frac{1}{8}x^2, & \mbox{ if }\quad x\in[0,4], \\
        -8+4x-\frac{3}{8}x^2, & \mbox{ if }\quad x\in[4,\frac{16}{3}], \\
        -24+9x-\frac{3}{4}x^2, & \mbox{ if }\quad x\in[\frac{16}{3},6],
    \end{cases}
    \quad\quad
    f_2(x) = x-\frac{1}{12}x^2.
\end{equation}
In particular,
\begin{equation}
    \Var(\{[\mR_i]\}) \geq f_2^{-1}(f_1(\widetilde{\Var}(\{[\mR_i]\}))).
\end{equation}
\end{theorem}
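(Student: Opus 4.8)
The plan is to express both $\tilde L^\orthgroup$ and $L^\orthgroup$ as functions of the ordinary Euclidean mean $\bar\mM := \frac{1}{N}\sum_{i=1}^N \mR_i\vg_i \in \R^{3\times 3}$ and its singular values, and then to extremize one against the other. For the first, the pairwise-distance identity \eqref{variance:Euclidean} applied in $\R^{3\times 3}$ with the Frobenius inner product, exactly as in the proof of Theorem~\ref{thm:sph-approx}, gives $\tilde L^\orthgroup(\vg_1,\dots,\vg_N) = \frac1N\sum_i\|\mR_i\vg_i - \bar\mM\|_F^2 = 3 - \|\bar\mM\|_F^2 = 3 - (\sigma_1^2+\sigma_2^2+\sigma_3^2)$, where $\sigma_1\ge\sigma_2\ge\sigma_3\ge0$ are the singular values of $\bar\mM$ and we used $\|\mR_i\vg_i\|_F^2=3$. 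For the second, expanding the square in \eqref{L-orthgroup} gives $L^\orthgroup(\vg_1,\dots,\vg_N) = 6 - 2\max_{\mR\in\orthgroup}\langle\mR,\bar\mM\rangle$, and the solution of the orthogonal Procrustes problem (see, e.g., \cite{Horn_1987}), obtained from an SVD $\bar\mM = U\,\mathrm{diag}(\sigma_1,\sigma_2,\sigma_3)\,V^\top$, is $\max_{\mR\in\orthgroup}\langle\mR,\bar\mM\rangle = \sigma_1+\sigma_2+s\sigma_3$ with $s := \sign(\det\bar\mM)$ (the sign being immaterial when $\det\bar\mM=0$, as then $\sigma_3=0$). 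Setting $t := s\sigma_3$ and $c := \sigma_1+\sigma_2+t$, we thus have $L^\orthgroup = 6-2c$ and $\tilde L^\orthgroup = 3-(\sigma_1^2+\sigma_2^2+t^2)$, so everything reduces to the range of $\sigma_1^2+\sigma_2^2+t^2$ as $(\sigma_1,\sigma_2,t)$ varies over the admissible set with $c$ held fixed.

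The key structural fact is that $\bar\mM$, being a convex combination of rotation matrices $\mR_i\vg_i\in\orthgroup$, satisfies $\sigma_1+\sigma_2-t \le 1$. To prove it, pick an SVD of $\bar\mM$ and form the orthogonal matrix $Q := U\,\mathrm{diag}(1,1,-s)\,V^\top$; a direct check gives $\det Q = -1$ and $\langle\bar\mM,Q\rangle = \sigma_1+\sigma_2-s\sigma_3 = \sigma_1+\sigma_2-t$. Now every orthogonal $3\times 3$ matrix of determinant $-1$ is a rotoreflection, with eigenvalues $\{-1,e^{i\theta},e^{-i\theta}\}$ and hence trace at most $1$; applying this to $(\mR_i\vg_i)^\top Q$ (again a determinant-$(-1)$ orthogonal matrix) gives $\langle\mR_i\vg_i,Q\rangle = \Tr\big((\mR_i\vg_i)^\top Q\big)\le 1$ for each $i$, and averaging over $i$ yields $\langle\bar\mM,Q\rangle\le 1$, which is the claim. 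Combined with $0\le|t|\le\sigma_2\le\sigma_1$ (which, together with $\sigma_1+\sigma_2-t\le1$, also forces $\sigma_1\le1$ automatically), this confines $(\sigma_1,\sigma_2,t)$ to the convex polytope $\gF := \{(\sigma_1,\sigma_2,t)\ :\ 0\le|t|\le\sigma_2\le\sigma_1,\ \sigma_1+\sigma_2-t\le1\}$. One can check the converse — every point of $\gF$ is the signed-singular-value triple of some average of rotations, which is the description of $\Conv(\orthgroup)$ in singular-value coordinates and shows the resulting bounds are tight, cf.\ Figure~\ref{fig:range} — but only the inclusion $(\sigma_1,\sigma_2,t)\in\gF$ is needed below.

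It remains to optimize $g(\sigma_1,\sigma_2,t):=\sigma_1^2+\sigma_2^2+t^2$ over $\gF\cap\{\sigma_1+\sigma_2+t=c\}$ for each fixed $c=(6-L^\orthgroup)/2\in[0,3]$. The minimum of $g$ is $c^2/3$ by Cauchy--Schwarz, attained at $(\tfrac c3,\tfrac c3,\tfrac c3)\in\gF$; hence $\tilde L^\orthgroup = 3-g \le 3-\tfrac{c^2}{3}$, which becomes exactly $f_2(L^\orthgroup)$ after substituting $c=(6-L^\orthgroup)/2$, proving the upper bound. For the lower bound we need the maximum of $g$; since $g$ is convex it is attained at a vertex of the two-dimensional polygon $\gF\cap\{\sigma_1+\sigma_2+t=c\}$, whose vertices arise by activating two of the four facet constraints $\sigma_1=\sigma_2$, $\sigma_2=t$, $\sigma_2=-t$, $\sigma_1+\sigma_2-t=1$. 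Enumerating these vertices and recording which are feasible as $c$ ranges over $[0,3]$ yields three regimes — $c\in[0,\tfrac13]$, $c\in[\tfrac13,1]$, $c\in[1,3]$ — with maxima $3c^2$, $\tfrac32c^2-c+\tfrac12$, $\tfrac12c^2-c+\tfrac32$, respectively; substituting $c=(6-L^\orthgroup)/2$ turns the breakpoints into $L^\orthgroup=\tfrac{16}{3}$ and $L^\orthgroup=4$ and turns $3-g_{\max}$ into the three branches of $f_1$ in \eqref{equivlance_rotations_bound}, giving $\tilde L^\orthgroup = 3-g \ge f_1(L^\orthgroup)$. Finally, for the variance inequality, let $\{\vg_i^\orthgroup\}$ and $\{\vg_i^*\}$ minimize $\tilde L^\orthgroup$ and $L^\orthgroup$ respectively; then, using \eqref{approx:var-rotation}, \eqref{formulation:rotation-new}, and the two bounds just proved, $f_1\big(\widetilde{\Var}(\{[\mR_i]\})\big) = f_1\big(L^\orthgroup(\{\vg_i^\orthgroup\})\big) \le \tilde L^\orthgroup(\{\vg_i^\orthgroup\}) \le \tilde L^\orthgroup(\{\vg_i^*\}) \le f_2\big(L^\orthgroup(\{\vg_i^*\})\big) = f_2\big(\Var(\{[\mR_i]\})\big)$; since $f_2$ is strictly increasing on $[0,6]$ with $f_2([0,6])=[0,3]\supseteq f_1([0,6])$, it is invertible there, and applying $f_2^{-1}$ gives $\Var(\{[\mR_i]\})\ge f_2^{-1}\big(f_1(\widetilde{\Var}(\{[\mR_i]\}))\big)$.

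I expect the main obstacle to be the structural claim $\sigma_1+\sigma_2-t\le 1$, i.e.\ pinning down $\Conv(\orthgroup)$ in singular-value coordinates: without it one only has the operator-norm bound $\sigma_1\le 1$, which gives a strictly weaker $f_1$ and in particular misses the middle branch on $[4,\tfrac{16}{3}]$. The subsequent vertex enumeration for $g_{\max}$ is elementary but bookkeeping-heavy — one must verify feasibility of each candidate vertex on each $c$-subinterval and check that $g_{\max}$ is continuous across $c=\tfrac13$ and $c=1$ — and the substitution $c=(6-L^\orthgroup)/2$ must be carried through carefully to land on the exact coefficients in \eqref{equivlance_rotations_bound}.
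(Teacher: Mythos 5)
Your proposal is correct and follows the same overall route as the paper: reduce $\tilde L^\orthgroup=3-(\sigma_1^2+\sigma_2^2+\sigma_3^2)$ and $L^\orthgroup=2\bigl(3-(\sigma_1+\sigma_2+\epsilon\sigma_3)\bigr)$ to the signed singular values of the average $\frac1N\sum_i\mR_i\vg_i$ via the Euclidean variance identity and the Procrustes/Kabsch solution, constrain those values by membership in $\Conv\orthgroup$, and then optimize one quantity against the other; your three-regime maxima $3c^2$, $\tfrac32c^2-c+\tfrac12$, $\tfrac12c^2-c+\tfrac32$ and the Cauchy--Schwarz minimum $c^2/3$ coincide exactly with the paper's supplementary computation, and your closing chain $f_1(\widetilde{\Var})\le\tilde L^\orthgroup(\{\vg_i^\orthgroup\})\le\tilde L^\orthgroup(\{\vg_i^*\})\le f_2(\Var)$ correctly supplies the final inequality. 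The one genuine difference is how the key constraint $\sigma_1+\sigma_2-\epsilon\sigma_3\le1$ is obtained: the paper invokes the orbitope characterization of $\Conv\orthgroup$ (Proposition 4.1 of Sanyal et al.) and additionally proves that averages of rotations are dense in $\Conv\orthgroup$, while you give a short self-contained argument — test the average against the improper orthogonal matrix $Q=U\,\mathrm{diag}(1,1,-\epsilon)V^\top$ and use that every determinant-$(-1)$ orthogonal $3\times3$ matrix has trace at most $1$. Your route is more elementary and isolates exactly what the inequality needs (only the inclusion into the constraint set, not the full equality with $\Conv\orthgroup$); the paper's route, by establishing the precise description of $\Conv\orthgroup$ and density, additionally shows the bounds $f_1,f_2$ are tight, which your argument would recover only through the converse inclusion you sketch but do not need. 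The remaining vertex enumeration you defer is routine and your stated breakpoints and branch formulas are the correct ones, so no gap remains beyond that bookkeeping.
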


The proof of Theorem~\ref{thm:ortho-approx} is given in Appendix~\ref{sec:proof_of_so3_var_approx}. It is worth mentioning that both $f_1$ and $f_2$ are monotone increasing with $f_1(0)=f_2(0)=0$ and $f_1(6)=f_2(6)=3$. They are in fact the lower and upper bound of $\tilde{L}^\orthgroup(\vg_1,\vg_2,\cdots,\vg_N)$ with given $L^\orthgroup(\vg_1,\vg_2,\cdots,\vg_N)$. Figure~\ref{fig:range} visualizes this approximation.

\section{The proposed numerical algorithm}

In this section, we present the semi-definite programming (SDP) relaxations of \eqref{formulation:rotation-approx} and \eqref{formulation:projection-approx} under the non-unique game (NUG) framework~\cite{Singer_2015_NUG} followed by a novel rounding algorithm for computing the representatives.

\subsection{The SDP relaxations}

Setting
\begin{equation*}
    f_{ij}(\vg) =
    \begin{cases}
        \frac{1}{2N^2}(\distort(\mR_i\vg,\mR_j))^2, & \quad\text{(spatial rotations)}, \\
        \frac{1}{2N^2}(\distsp(\vg^\top\vn_i,\vn_j))^2, & \quad\text{(projection directions)},
    \end{cases}
\end{equation*}
the minimization problems \eqref{formulation:rotation-approx} and \eqref{formulation:projection-approx} can be formulated as
\begin{equation}\label{min:NUG_original}
    \min_{\vg_1,\vg_2,\cdots,\vg_N\in\gG}~\sum_{1\leq i,j\leq N} f_{ij}(\vg_i\vg_j^{-1}).
\end{equation}
The problem \eqref{min:NUG_original} can be relaxed to a SDP, named the non-unique game (NUG) formulation~\cite{Singer_2015_NUG}, using the generalized Fourier transformation. The details are given in the next paragraph.

For a finite group $\gG$, there exists a list of group homomorphisms $\{\rho_k:\gG\to\mathrm{U}(d_k)\}_{k=0,\cdots,K-1}$ named unitary irreducible representations of $\gG$, where $\mathrm{U}(d_k) = \{\mX\in\sC^{d_k\times d_k}|\mX\mX^\HT = \mI_{d_k}\}$, $\mX^\HT$ is the conjugate transpose of $\mX$, and $d_k$ is the dimension of $\rho_k$. This work focuses on molecular symmetry groups, and their unitary irreducible representations are given in Section 3 of the supplementary material. We set $\rho_0$ to be trivial, i.e., $\rho_0(\vg)=1$ for all $\vg\in\gG$. For any function $f\in L_2(\gG)$ where $L_2(\gG)=\{f:\gG\mapsto\mathbb{C}|\int |f(g)|^2d\mu<\infty\}$ and $d\mu$ indicates integration with respect to the Haar measure on the group\footnote{In our case, $\gG$ is finite, and any complex valued function on $\gG$ belongs to $L_2(\gG)$.}, its generalized forward Fourier transform $\hat{f}$ is
\begin{equation*}
    \hat{f}(k) = \frac{1}{|\gG|}\sum_{\vg\in\gG} f(\vg)\rho_k(\vg)^\HT,\quad k=0,1,\cdots,K-1,
\end{equation*}
and the generalized inverse Fourier transform~\cite{Serre_1977_Representation_Theory} is
\begin{equation}\label{eq:generalized_inverse_Fourier_transform}
    f(\vg) = \sum_{k=0}^{K-1} d_k\Tr(\hat{f}(k)\rho_k(\vg)),
\end{equation}
where $\Tr(\mX)$ is the trace of $\mX$. Utilizing the above generalized forward/inverse Fourier transform, \eqref{min:NUG_original} is equivalent to the matrix form
\begin{equation}
    \begin{aligned}
        \min_{\vg_1,\cdots,\vg_N\in\gG} & \sum_{k=0}^{K-1} \Tr(\mF_k\mX^k), \\
        \text{s.t.} & \quad \mX_{ij}^k = \rho_k(\vg_i\vg_j^{-1}), \quad\forall 1\leq i,j\leq N, 0\leq k\leq K-1,
    \end{aligned}
\end{equation}
where $\mF_k=d_k(\hat{f}_{ij}(k))_{1\leq i,j\leq N}\in\sC^{Nd_k\times Nd_k}$. Since the group homomorphism $\rho_k$ satisfies $\rho_k(\vg_i\vg_j^{-1}) = \rho_k(\vg_i)\rho_k(\vg_j)^\HT\in\mathbb{C}^{d_k\times d_k}$, the matrix $\mX^k$ has the form:
\begin{equation}\label{equivalent-NUG}
    \mX^k =
        \begin{bmatrix}\rho_k(\vg_1) \\ \rho_k(\vg_2) \\ \vdots \\ \rho_k(\vg_N)\end{bmatrix}
        \begin{bmatrix}\rho_k(\vg_1) \\ \rho_k(\vg_2) \\ \vdots \\ \rho_k(\vg_N)\end{bmatrix}^\HT
        \in\mathbb{C}^{Nd_k\times Nd_k}.
\end{equation}
The NUG approach~\cite{Singer_2015_NUG} relaxes the constraints \eqref{equivalent-NUG} to
\begin{subequations}
\begin{align}
    & \mX^k\succeq 0,  \quad\forall 0\leq k\leq K-1, \label{con:a}\\
    & \mX_{ij}^0 = 1,  \quad\forall 1\leq i,j\leq N,\label{summation}\\
    & \mX_{ii}^k = \mI_{d_k},  \quad \forall 0\leq k\leq K-1, 1\leq i\leq N,\\
    & \sum_{k=0}^{K-1} d_k\Tr(\mX_{ij}^k\rho_k(\vg)^\HT) \geq0,  \quad \forall \vg\in \gG, 1\leq i,j\leq N.\label{convex-hull}
\end{align}
\end{subequations}
In summary, instead of solving \eqref{equivalent-NUG}, the NUG approach solves the problem:
\begin{equation}\label{NUG-sdp}
    \min_{\mX^0,\cdots,\mX^{K-1}} \sum_{k=0}^{K-1} \Tr(\mF_k\mX^k),\quad\text{s.t.}\quad \{\mX^k\}\text{ satisfies \eqref{con:a}-\eqref{convex-hull}},
\end{equation}
which is an SDP that can be solved by existing convex optimization solvers such as CVX \cite{Andersen_2013_CVXOPT}, SDPT3 \cite{Toh_2003_SDPT3}, SDPNAL \cite{Yang_2015_SDPNAL}. In \eqref{NUG-sdp}, there are $O(N^2|\gG|)$ variables and $O(N^2|\gG|)$ constraints in total, leading to a large-scale problem if $N$ and $|\gG|$ are huge. Therefore, designing an efficient solver for large-scale SDP is desirable. We will leave it as our future work.

\subsection{A rounding algorithm}\label{sec:rounding_algorithm}

Let $\{\mX^k\}_{k=0}^{K-1}$ to be the solution from \eqref{NUG-sdp}, it needs to design a rounding procedure for finding the representatives $\{\vg_i\}_{i=1}^N$. The rounding procedure in the original NUG framework~\cite{Singer_2015_NUG} is mainly designed for $\mathrm{SO}(2)$ or $\orthgroup$ and based on the eigenvalue decomposition of $\mX^1$. More specifically, let $\vv_1,\cdots,\vv_{d_1}$ to be the top $d_1$ eigenvectors of $\mX^1$, it finds $\{\vg_i\}_{i=1}^N$ using the approximation
\[\mV = \begin{bmatrix}\vv_1 & \vv_2 & \cdots & \vv_{d_1}\end{bmatrix} \approx \begin{bmatrix}\rho_1(\vg_1) \\ \rho_1(\vg_2) \\ \vdots \\ \rho_1(\vg_N)\end{bmatrix}.\]
It is worth mentioning that the consecutive $d_1$ rows in $\mV$ may not lie in the image of $\rho_1$, and a projection process is needed. The above method implicitly requires the injectiveness of $\rho_1$, which may not hold for other types of groups. Furthermore, the information in $\{\mX^k\}_{k=1}^{K-1}$ is not fully explored. Here, we propose a greedy algorithm for determining $\{\vg_i\}_{i=1}^N$ based on $\{\mX^k\}_{k=0}^{K-1}$ from \eqref{NUG-sdp}.

Let $[N] = \{1,2,\cdots,N\}$, we define a ``partial solution" function $s:[N]\times[N]\to\gG$ together with an indicator function $\hat{s}:[N]\times[N]\to\{0,1\}$ such that
\begin{equation*}
    \vg_i\vg_j^{-1}
    \begin{cases}
        = s(i,j), & \text{ if } \hat{s}(i,j)=1,\\
        \text{is undetermined}, & \text{ if } \hat{s}(i,j)=0.
    \end{cases}
\end{equation*}
and its ``partial cost" function
\begin{equation}
    \mathrm{PC}(s) = \sum_{\substack{i,j\in[N] \\ \hat{s}(i,j)=1}}f_{ij}(s(i,j)).
\end{equation}
Let $\ve$ be the identity element in $\gG$, we initialize $s(i,i)=\ve,\hat{s}(i,i)=1$ for any $i\in[N]$ and $\hat{s}(i,j)=0$ for $i\neq j$. Our goal is to update $s$ such that $\hat{s}(i,j)=1$ for any $1\leq i,j\leq N$, and $s$ must be a ``compatible'' partial solution, i.e., it satisfies the following properties:
\begin{itemize}
    \item $\forall i,j\in[N]$, if $\hat{s}(i,j)=1$ then $\hat{s}(j,i)=1$ and $s(j,i)=s(i,j)^{-1}$.
    \item $\forall i_1,i_2,\cdots,i_l\in[N]$, if $\hat{s}(i_1,i_2)=\hat{s}(i_2,i_3)=\cdots=\hat{s}(i_{l-1},i_l)=1$ then $\hat{s}(i_1,i_l)=1$ and $s(i_1,i_l)=s(i_1,i_2)\cdots s(i_{l-1},i_l)$.
\end{itemize}

To achieve the above goal, we update the ``partial solution" $s$ in a sequential way that consists of three steps:
\begin{itemize}
    \item {\bf Step 1:} Choose one index pair $(i,j)$ with $\hat{s}(i,j)=0$;
    \item {\bf Step 2:} Update the value of $s(i,j)$ such that $\hat{s}(i,j)=0$;
    \item {\bf Step 3:} Find the ``closure'' of $s$ via Algorithm~\ref{alg:TC-s}.
\end{itemize}

\begin{algorithm}[ht!]
    \caption{Cl($s$): the closure of $s$}
    \label{alg:TC-s}
    \begin{algorithmic}[1]
        \State Initialize $\mathrm{Cl}(s) \gets s$
        \Repeat
            \State Find $(i,j)\in[N]^2$ such that $\widehat{\mathrm{Cl}(s)}(i,j)=1$ but $\widehat{\mathrm{Cl}(s)}(j,i)=0$.
            \State Update $\mathrm{Cl}(s)$ by $\widehat{\mathrm{Cl}(s)}(j,i) \gets 1,\mathrm{Cl}(s)(j,i) \gets \mathrm{Cl}(s)(i,j)^{-1}$.
            \State Find $(o,p,q)\in[N]^3$ such that $\widehat{\mathrm{Cl}(s)}(o,p)=\widehat{\mathrm{Cl}(s)}(p,q)=1$ but $\widehat{\mathrm{Cl}(s)}(o,q)=0$.
            \State Update $\mathrm{Cl}(s)$ by $\widehat{\mathrm{Cl}(s)}(o,q) \gets 1,\mathrm{Cl}(s)(o,q) \gets \mathrm{Cl}(s)(o,p)\mathrm{Cl}(s)(p,q)$.
        \Until{there is no possible update of $\mathrm{Cl}(s)$.}
    \end{algorithmic}
\end{algorithm}

Now, we give the criterion for choosing the index pair $(i,j)$ in {\bf Step 1}. Define
\begin{equation}\label{coeff}
    \lambda_{ij}(\vg) = \frac{1}{|\gG|} \sum_{k=0}^{K-1} d_k\Tr(\mX_{ij}^k\rho_k(\vg)^\HT),
\end{equation}
the condition \eqref{convex-hull} is equivalent to $\lambda_{ij}(\vg) \geq 0$. Moreover,
\begin{equation}\label{coeff_sum}
    \begin{aligned}
        \sum_{\vg\in \gG}\lambda_{ij}(\vg)
        & = \sum_{k=0}^{K-1}d_k\Tr\left(\mX_{ij}^k\frac{1}{|\gG|}\sum_{\vg\in \gG}\rho_k(\vg)^\HT\right)  = d_0\Tr(\mX_{ij}^0) = 1,
    \end{aligned}
\end{equation}
where the second equality is from the Schur orthogonality relations (\textbf{Corollary 2} and \textbf{Corollary 3} in \textbf{Section 2.2}~\cite{Serre_1977_Representation_Theory}):
\begin{equation}
    \frac{1}{|\gG|}\sum_{\vg\in \gG}\rho_k(\vg)^\HT=\begin{cases}1 & k=0, \\ \vzero_{d_k} & k\neq 0,\end{cases}
\end{equation}
and the last equality is from \eqref{summation}. In addition,
\begin{equation}\label{eqn:convex-com}
    \mX_{ij}^k=\sum_{\vg\in \gG}\lambda_{ij}(\vg)\rho_k(\vg)
\end{equation}
holds by inverse Fourier transform. Combining \eqref{coeff}, \eqref{coeff_sum} and \eqref{eqn:convex-com}, it suggests that $\mX_{ij}^k$ is a convex combination of $\rho_k(\vg)$. Since~$\mX_{ij}^k = \rho_k(\vg_i \vg_j^{-1}) = \rho_k(\tilde{\vg})$ for some $\tilde{\vg}\in \gG$, \eqref{eqn:convex-com} is a natural relaxation of the constraint in \eqref{equivalent-NUG} and $\lambda_{ij}(\vg)$ indicates the ``probability'' of the event $\vg_i\vg_j^{-1} = \vg$. Suppose $M=|\gG|$ and $\gG=\{\vg^1,\vg^2,\cdots,\vg^M\}$, for each pair $(i,j)$, the group elements are sorted by the descending order of $\lambda_{ij}(\vg)$, i.e., find $\gG_{ij}=\{\vg_{ij}^1,\cdots,\vg_{ij}^M\}=\gG$ such that
\begin{equation}\label{eqn:sort}
    \lambda_{ij}(\vg_{ij}^1)\geq\lambda_{ij}(\vg_{ij}^2)\geq\cdots\geq\lambda_{ij}(\vg_{ij}^M).
\end{equation}
We choose the index as
\begin{equation}\label{criterion:index}
    (i,j) \in \argmax\{\lambda_{ij}(\vg_{ij}^1)|\hat{s}(i,j)=0\},
\end{equation}
and update $s(i,j)$ as $\vg_{ij}^1$.

However, to avoid bad local minimums, we simultaneously maintain at most $m$ partial solution functions $\{s_1,s_2,\cdots,s_l\}$, where $1\leq l\leq m$. Selecting the index $(i,j)$ as \eqref{criterion:index}, we find $L$ such that
\begin{equation}\label{eqn:L}
    L = \min\left\{k\left|\sum_{t=1}^k\lambda_{ij}(\vg_{ij}^t)\geq c\right.\right\},
\end{equation}
where $c\in[0,1]$ is a fixed threshold hyperparameter. Define candidate partial solution functions $r_{p,q}~(1\leq p\leq l, 1\leq q\leq L)$ by adding new guess of $\vg_i\vg_j^{-1}$ into $s_p$ as
\begin{equation}\label{eqn:candidate}
\begin{aligned}
    r_{p,q}(i',j') &=
    \begin{cases}
        \vg_{ij}^q, & \text{ if } (i',j') = (i,j); \\
        s_p(i',j'), & \text{ otherwise}.
    \end{cases} \\
    \hat{r}_{p,q}(i',j') &=
    \begin{cases}
        1, & \text{ if } (i',j')=(i,j); \\
        \hat{s}_p(i',j'), & \text{ otherwise}.
    \end{cases}
\end{aligned}
\end{equation}
Taking the closure of $r_{p,q}$ as $\mathrm{Cl}(r_{p,q})$, we then arrange them in the ascending order by the partial cost, i.e.,
\begin{equation}
    \mathrm{PC}(\mathrm{Cl}(r_{p^1,q^1}))\leq \mathrm{PC}(\mathrm{Cl}(r_{p^2,q^2}))\leq \cdots\leq \mathrm{PC}(\mathrm{Cl}(r_{p^{lL},q^{lL}})).
\end{equation}
Finally, we set $l\leftarrow\min\{lL,m\}$ and update the list $s_1,\cdots,s_l$ as $s_t\leftarrow\mathrm{Cl}(r_{p^t,q^t})$. The detailed rounding procedure is given in Algorithm~\ref{algo:greedy}.

\begin{algorithm}[ht]
    \caption{A greedy rounding algorithm}
    \label{algo:greedy}
    \begin{algorithmic}[1] 
        \State Input: two hyperparameters $m\in\mathbb{Z}^+,c\in[0,1]$.
        \State Initialization: $l=1$, $s_1(i,i) \gets \ve$, $\hat{s}_1(i,j) \gets \begin{cases} 1 & i=j \\ 0 & i\neq j\end{cases}$.
        \State Sort all group elements for each $(i,j)$ such that \eqref{eqn:sort} holds.
        \While{$\{(i,j)|\hat{s}_1(i,j)=0\}\neq\emptyset$}
            \State $(i,j) \gets \argmax\{\lambda_{ij}(\vg_{ij}^1)|\hat{s}_1(i,j)=0\}$.
            \State Compute $L$ as \eqref{eqn:L}.
            \State Compute candidate partial solutions $r_{p,q}$s as \eqref{eqn:candidate}.
            \State Compute $\mathrm{Cl}(r_{p,q}), 1\leq p\leq l, 1\leq q\leq L$ via Algorithm~\eqref{alg:TC-s}.
            \State Compute the partial costs of $\mathrm{Cl}(r_{p,g})$ and arrange them in the ascending order.
            \State Set $l$ as $ \min\{lL,m\}$.
            \State Choose $\{s_1,\cdots,s_l\} \gets $ top $l$ candidate $\mathrm{Cl}(r_{p^i,q^i})$.
        \EndWhile
        \State Output: $\vg_i=s_1(i,1)$.
    \end{algorithmic}
\end{algorithm}

The partial solutions and their closures in Algorithm~\ref{algo:greedy} can be efficiently maintained by a union-find data structure \cite{Cormen_2009_Introduction_to_Algorithms} in implementation, and priority queue data structure \cite{Cormen_2009_Introduction_to_Algorithms} is suitable for the list of candidates. The time complexity of sorting group elements for each $(i,j)$ is $O(M\log M)$, and then sorting the indices is $O(N^2\log N)$, in total $O(N^2M\log M+N^2\log N)$. The enumeration of the index $(i,j)$ takes $O(N^2)$. Computing partial costs needs $O(N^2)$ time in total. There are exactly $N-1$ executions of Line 6, each with $O(L)=O(M)$ time. Finally, the complexity of the query of $s(i,j)$ is negligible since it is $O(\alpha(N))$ where $\alpha$ is the inverse function of Ackermann function, which grows exceptionally slowly \cite{Cormen_2009_Introduction_to_Algorithms}. Therefore, given fixed $m$ and $c$, the time complexity of the while-loop is negligible compared to the sorting step (including Line 5). In conclusion, the total time complexity of our rounding algorithm is $O(N^2M\log M+N^2\log N)$, which is not a bottleneck of the whole algorithm comparing to solving \eqref{NUG-sdp}.

\section{Numerical experiments}

In this section, we first test the performance of the proposed approach on simulated data, measuring its capability of reaching global optimal. Then, we demonstrate the application of our method on a clustering problem using real-world cryo-EM data.

\subsection{Simulation experiments}\label{sec:simulation_experiments}

We validate our approach through the following two experiments:
\begin{enumerate}
    \item the approximation ability of $\tilde{L}^\orthgroup$ to $L^\orthgroup$;
    \item the global solution analysis of the NUG approach for solving $\tilde{L}^\orthgroup$ and $\tilde{L}^\sph$, and the comparison with the rounding technique in \cite{Singer_2015_NUG}.
\end{enumerate}

The simulation dataset contains 1000 cases. Each case randomly generates $N=1+\lceil\log_{|\gG|}1000\rceil$ points from $\orthgroup$ or $\sph$. Since we have to test all the possible combinations of $(\vg_1,\cdots,\vg_n)\in\gG^N$ to find the global minimum, the number of points $N$ can not be very large due to the limit of computational sources. We test the performance by considering the symmetry groups $\gG=\gC_2,\gC_7,\gD_2,\gD_7,\gT,\gO,\gI$ (see Section 2 of supplementary material and also ~\cite{Hu_2019}). The hyperparameters $m$ and $c$ are set to default value $20$ and $0.99$, respectively.

\subsubsection{The approximation ability of $\tilde{L}^\orthgroup$ to $L^\orthgroup$}

We use the arithmetic distance and compute the global minimum of $L^\orthgroup$ and $\tilde{L}^\orthgroup$ by the brute-force method, denoted as $\{\vg_i^{GL}\}_{i=1}^N$ and $\{\tilde{\vg}_i^{GL}\}_{i=1}^N$ respectively. We consider the following metrics:
\begin{align}
    & \text{Ratio of Equality (RoE)} = \frac{\text{ the number of trials with $\{\vg_i^{GL} = \tilde{\vg}_i^{GL}, i\in[N]\}$ }}{\text{the total number of trials}}, \label{metric:RoE}\\
    & \text{Relative Cost Gap (RCG)} = \frac{|L^\orthgroup(\{\vg_i^{GL}\}_{i=1}^N)-L^\orthgroup(\{\tilde{\vg}_i^{GL}\}_{i=1}^N)|}{|L^\orthgroup(\{\vg_i^{GL}\}_{i=1}^N)|}, \label{metric:RCG} \\
    & \text{Ratio of RCG}<p = \frac{\text{ the number of trials with $\text{RCG}<p$ }}{\text{the total number of trials}}, \quad p\in(0,1). \label{metric:RRCG}
\end{align}
The detailed results are given in Table~\ref{tab:performance_of_approximation}. The result shows that $\tilde{L}^\orthgroup$ can well approximate $L^\orthgroup$ for all symmetry groups except the cyclic groups. However, it is noted that there is a high probability that the relative cost gap is less than $0.1$, which validates the rationality of our approximation.

\begin{table}[!htbp]
    \centering
    \caption{The approximation ability of $\tilde{L}^\orthgroup$ to $L^\orthgroup$. RoE is defined in \eqref{metric:RoE} and ratio of RCG$<p$ is defined in \eqref{metric:RRCG}.}
    \begin{tabular}{c|ccc}
        \toprule
        Group $\gG$ & RoE & ratio of RCG$<0.01$ & ratio of RCG$<0.1$ \\
        \midrule
        $\gC_2$ & 33.7\% &  50.6\% &  95.3\% \\
        $\gC_7$ & 44.8\% &  60.5\% &  94.2\% \\
        $\gD_2$ & 93.4\% &  96.7\% & 100.0\% \\
        $\gD_7$ & 96.1\% &  99.1\% & 100.0\% \\
        $\gT$   & 98.4\% &  99.9\% & 100.0\% \\
        $\gO$   & 98.6\% & 100.0\% & 100.0\% \\
        $\gI$   & 99.9\% & 100.0\% & 100.0\% \\
        \bottomrule
    \end{tabular}
    \label{tab:performance_of_approximation}
\end{table}

\subsubsection{The NUG approach for solving $\tilde{L}^\orthgroup$ and $\tilde{L}^\sph$}

Same as the previous subsection, we calculate the global optimal value of $\tilde{L}^\orthgroup$ and $\tilde{L}^\sph$ by the brute-force method, denoted as $\tilde{L}^\orthgroup_{GL}$ and $\tilde{L}^\sph_{GL}$ respectively. Also, we calculate the solution by the SDP relaxations and the proposed rounding algorithm~\ref{algo:greedy}. We substitute the NUG solution into $\tilde{L}^\orthgroup$ and $\tilde{L}^\sph$, defined as $\mathrm{NUG}^{\orthgroup}$ and $\mathrm{NUG}^{\sph}$ respectively. Define the relative cost gap of the NUG approach (RCG-NUG) as
\begin{equation*}
    \text{RCG-NUG} = \begin{cases} \frac{|\tilde{L}^\orthgroup_{GL}-\mathrm{NUG}^\orthgroup|}{|\tilde{L}^\orthgroup_{GL}|}, & \text{ (Spatial rotations)}; \\
    \frac{|\tilde{L}^\sph_{GL}-\mathrm{NUG}^\sph|}{|\tilde{L}^\sph_{GL}|}, & \text{ (Projection directions)}.
    \end{cases}
\end{equation*}
We evaluate the performance of the proposed method by the following two criteria:
\begin{equation}\label{metric:NUG}
    \begin{aligned}
        \text{Accuracy} & = \frac{\text{the number of trials with RCG-NUG=0}}{\text{the total number of trials}}, \\
        \text{Maximal RCG-NUG} & = \text{the maximal value of RCG-NUG among all trials}.
    \end{aligned}
\end{equation}
Notice that by $\text{RCG-NUG}=0$ we mean the NUG solution of $\{\vg_i^{GL}\}_{i=1}^N$ is identical to $\{\tilde{\vg}_i^{GL}\}_{i=1}^N$. The detailed results are given in Table~\ref{tab:performance_of_the_NUG_approach}. Note that we test the performance of NUG approach for both arithmetic distance $d_\orthgroup^A,d_\sph^A$ and geometric distance $d_\orthgroup^G,d_\sph^G$. Our method almost recovers all the global solutions under different molecular symmetries and distances on $\orthgroup$ and $\sph$. The relative cost gap is slim for the trials in which the NUG approach fails to obtain the global minimum.

\begin{table}[!htbp]
    \centering
    \caption{The global optimal analysis of the NUG approach. (Accuracy, Maximal RCG-NUG) are defined in \eqref{metric:NUG}.}
    \begin{tabular}{c|cc|cc}
        \toprule
        \multirow{2}*{Group $\gG$} &
        \multicolumn{2}{c|}{Spatial rotations} &
        \multicolumn{2}{c}{Projection directions} \\
        & $d_\orthgroup^A$ & $d_\orthgroup^G$ & $d_\sph^A$ & $d_\sph^G$ \\
        \midrule
        $\gC_2$ & (98.1\%, 0.82\%) & (98.7\%, 1.32\%) &  (100\%,    0\%) & (100\%, 0\%) \\
        $\gC_7$ & (99.9\%, 0.22\%) & (98.7\%, 2.33\%) &  (100\%,    0\%) & (100\%, 0\%) \\
        $\gD_2$ & (99.9\%, 0.13\%) &  (100\%,    0\%) & (99.9\%, 0.02\%) & (100\%, 0\%) \\
        $\gD_7$ &  (100\%,    0\%) & (99.9\%, 0.58\%) &  (100\%,    0\%) & (100\%, 0\%) \\
        $\gT$   &  (100\%,    0\%) &  (100\%,    0\%) &  (100\%,    0\%) & (100\%, 0\%) \\
        $\gO$   &  (100\%,    0\%) &  (100\%,    0\%) &  (100\%,    0\%) & (100\%, 0\%) \\
        $\gI$   &  (100\%,    0\%) &  (100\%,    0\%) &  (100\%,    0\%) & (100\%, 0\%) \\
        \bottomrule
    \end{tabular}
    \label{tab:performance_of_the_NUG_approach}
\end{table}

Also, we test the performance of the rounding algorithm proposed in the original NUG method~\cite{Singer_2015_NUG} to compare it with the proposed algorithm. As we discussed in \textbf{Section} \ref{sec:rounding_algorithm}, we only implement it when $\gG$ is cyclic since $\rho_1$ of other molecular symmetry groups is not injective. The results are given in Table~\ref{tab:comparison_with_Singer_s_rounding}. Both accuracy and maximal RCG-NUG are noticeably lower than the proposed algorithm.

\begin{table}[!htbp]
    \centering
    \caption{The (Accuracy, Maximal RCG-NUG) results of by replacing the rounding algorithm in the NUG approach with that proposed~\cite{Singer_2015_NUG}.}
    {\small \begin{tabular}{c|cc|cc}
        \toprule
        \multirow{2}*{Group $\gG$} & \multicolumn{2}{c|}{Spatial rotations} &
        \multicolumn{2}{c}{Projection directions} \\
        & $d_\orthgroup^A$ & $d_\orthgroup^G$ & $d_\sph^A$ & $d_\sph^G$ \\
        \midrule
        $\gC_2$ & (81.1\%, 4.32\%) & (80.4\%,  7.88\%) & (90.1\%, 7.94\%) & (89.2\%, 11.76\%) \\
        $\gC_7$ & (92.2\%, 6.66\%) & (84.9\%, 10.06\%) & (99.0\%, 4.21\%) & (98.9\%,  1.66\%) \\
        \bottomrule
    \end{tabular}}
    \label{tab:comparison_with_Singer_s_rounding}
\end{table}

Next, we test the sensitivity of our method (see details in Section 5 of the supplementary material), varying the number of ``partial solutions" $m$ and the threshold probability $c$ under the distance $d_\orthgroup^A$ (Table~\ref{tab:hyperparameters}). $m$ can be reduced to 12 without sacrificing the accuracy if $c=0.99$, compared to the the first column ($m=20,c=0.99$) in Table~\ref{tab:performance_of_the_NUG_approach}. Meanwhile, the threshold probability $c$ significantly affects the accuracy, making it a vital parameter in the proposed greedy algorithm. $m = 20, c = 0.99$ is used as the default setting in real-world applications, including the demonstration of a real-world cryo-EM clustering problem in the following paragraphs.

\begin{table}[!htbp]
    \centering
    \caption{The (Accuracy, Maximal RCG-NUG) results for varying $m$ and $c$ under the $d_\orthgroup^A$. $c=0$ means $L=1$ in Algorithm~\ref{algo:greedy}.}
    {\small \begin{tabular}{c|cc|cc}
        \toprule
        \multirow{2}*{Group $\gG$} & \multicolumn{2}{c|}{$c=0.99$} & \multicolumn{2}{c}{$m=20$} \\
        & $m=12$ & $m=4$ & $c=0.5$ & $c=0$ \\
        \midrule
        $\gC_2$ & (98.1\%, 0.82\%) & (98.0\%, 0.82\%) & (89.6\%,  2.40\%) & (89.6\%,  2.40\%) \\
        $\gC_7$ & (99.8\%, 0.50\%) & (97.9\%, 8.27\%) & (95.7\%,  2.48\%) & (95.3\%,  2.48\%) \\
        $\gD_2$ & (99.9\%, 0.13\%) & (99.9\%, 0.13\%) & (94.6\%,  8.74\%) & (94.6\%,  8.74\%) \\
        $\gD_7$ &  (100\%, 0\%)    & (99.8\%, 1.62\%) & (96.8\%, 13.11\%) & (96.5\%, 17.62\%) \\
        $\gT$   &  (100\%, 0\%)    &  (100\%, 0\%)    & (98.2\%, 33.32\%) & (98.2\%, 33.32\%) \\
        $\gO$   &  (100\%, 0\%)    &  (100\%, 0\%)    & (99.2\%,  6.89\%) & (99.2\%,  6.89\%) \\
        $\gI$   &  (100\%, 0\%)    &  (100\%, 0\%)    & (99.7\%,  2.00\%) & (99.7\%,  2.00\%) \\
        \bottomrule
    \end{tabular}}
    \label{tab:hyperparameters}
\end{table}

Finally, we measure the running time of our algorithm in its default setting. The configuration of this experiment is identical to the previous one, except that we fix the problem size $N=10$ and $20$ since we do not need to employ brute force enumeration to obtain the ground truth solution. We test our algorithm on a modern CPU computation node, which is equipped with 2 Intel(R) Xeon(R) Gold 6230 CPUs @ 2.10GHz totaling 40 cores and 188GB of RAM. In each test case, we measure the average running time of one mean and variance computation. The results are presented in Table~\ref{tab:running_time}. Thus, it is desirable to find an efficient and scalable minimization algorithm to solve \eqref{NUG-sdp}, which will be our next goal.

\begin{table}[!htbp]
    \centering
    \caption{The running time of one mean and variance computation by our algorithm. For $\gG=\gI$ and $N=20$, the scale of optimization problem is too large to be solved.}
    \begin{tabular}{c|cc|cc}
        \toprule
        \multirow{2}*{Group $\gG$} & \multicolumn{2}{c|}{Spatial rotations} & \multicolumn{2}{c}{Projection directions} \\
        & $N=10$ & $N=20$ & $N=10$ & $N=20$ \\
        \midrule
        $\gC_2$ &  0.27s &   0.96s &  0.25s &   0.89s \\
        $\gC_7$ & 14.07s & 253.29s & 10.59s & 106.43s \\
        $\gD_2$ &  0.94s &   4.46s &  0.93s &   4.85s \\
        $\gD_7$ &  5.55s &  46.09s &  5.05s &  46.44s \\
        $\gT$   & 20.50s & 173.66s & 17.92s & 175.37s \\
        $\gO$   & 13.00s & 172.99s & 12.46s & 157.39s \\
        $\gI$   & 63.36s &   NA    & 85.07s &   NA    \\
        \bottomrule
    \end{tabular}
    \label{tab:running_time}
\end{table}

\subsection{K-means clustering under molecular symmetry}

The K-means clustering algorithm is a classical clustering algorithm that consists of two key steps: 1. calculate the mean of a clustering result; 2. assign the label of each point using the updated mean of each cluster. This section considers the clustering problem under the $\gC_3$ molecular symmetry group. Here, we replace the distance with the arithmetic distance $d_\sph^A$ and calculate the mean using the proposed method by setting $m=20$ and $c=0.99$. As an additional note, the geometric distance can also be used, and the experimental results are identical.

Projection directions containing five classes are generated. Each class contains $100$ projection directions, evenly distributed on the quotient circle, of which the radius is $0.2$ and the center is $\left[\left(0, \frac{1}{2}, \frac{\sqrt{3}}{2}\right)^\top\right]$ (red), $\left[\left(\frac{3}{4}, \frac{\sqrt{3}}{4}, \frac{1}{2}\right)^\top\right]$ (yellow), $\left[\left(1, 0, 0\right)^\top\right]$ (cyan), $\left[\left(\frac{3}{4}, \frac{\sqrt{3}}{4}, -\frac{1}{2}\right)^\top\right]$ (lime) and $\left[\left(0, \frac{1}{2}, -\frac{\sqrt{3}}{2}\right)^\top\right]$ (pink), respectively, visualized in Figure~\ref{fig:Projection_clustering}a.

We apply the proposed algorithm for dividing these points into five clusters, resulting in $100\%$ clustering accuracy Figure~\ref{fig:Projection_clustering}b. As a comparison, we conduct the conventional K-means method on the same datasets. The conventional K-means method selects a fundamental domain of $\sph/\gG$, and then rotates each data point into the fundamental domain. Finally, it uses the distance $d_\sph^A$ on $\sph$ to cluster the transformed points with the classical K-means algorithm. The result obtained by the conventional method varies according to random seeds. Clustering in four runs is plotted in Figure~\ref{fig:Projection_clustering}c, of which clustering accuracy varies from $60.0\%$ to $75.6\%$. In contrast to our proposed clustering method, the actual topology induced by molecular symmetry, or say, true distance in quotient space, is neglected. The closer the projection direction is to the edge of the fundamental domain, the more severe the consequences (i.e., error) of neglecting the molecular symmetry.

\begin{figure}[htbp]
    \centering
    \includegraphics[width=\textwidth]{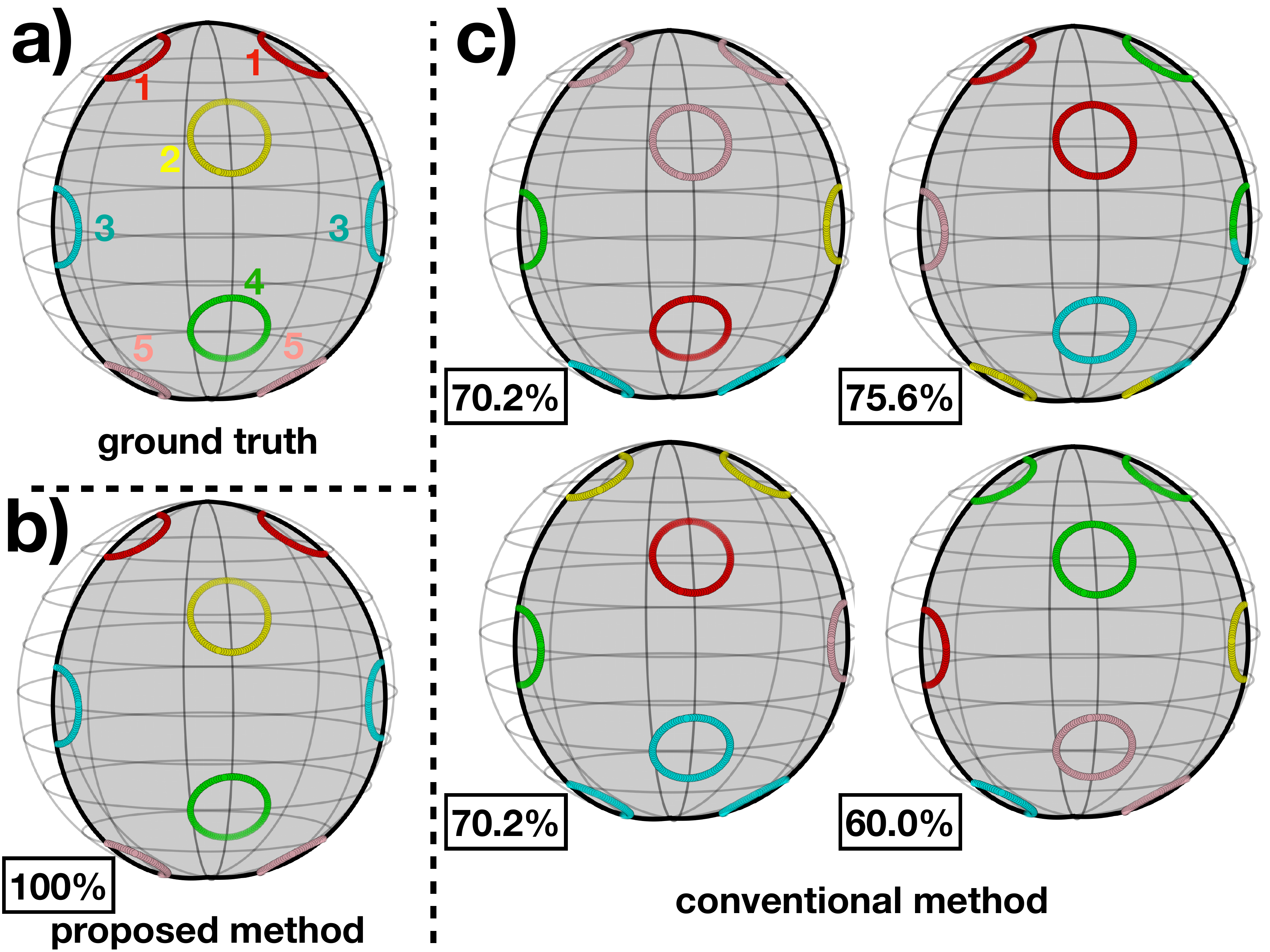}
    \caption{\textbf{Clustering of projection directions considering $\gC_3$ molecular symmetry group.} Five colors distinguish the five clusters. For presentation purposes, in each $[\vn_i]$, an element within a given fundamental domain is selected for display. The spheres are rotated in order to place projection directions toward the reader. \textbf{a}, Projection directions contain five classes. Each class includes $100$ projection directions, evenly distributed on the quotient circle, of which the radius is $0.2$ and the center is $\left[\left(0, \frac{1}{2}, \frac{\sqrt{3}}{2}\right)^\top\right]$ (red), $\left[\left(\frac{3}{4}, \frac{\sqrt{3}}{4}, \frac{1}{2}\right)^\top\right]$ (yellow), $\left[\left(1, 0, 0\right)^\top\right]$ (cyan), $\left[\left(\frac{3}{4}, \frac{\sqrt{3}}{4}, -\frac{1}{2}\right)^\top\right]$ (lime) and $\left[\left(0, \frac{1}{2}, -\frac{\sqrt{3}}{2}\right)^\top\right]$ (pink). \textbf{b and c}, $10$ projection directions are randomly selected from each cluster to compute the mean during K-means clustering. Clustering accuracy is labeled on the left lower corner. \textbf{b}, K-means clustering by the proposed method. \textbf{c}, K-means clustering by the conventional method. Four runs are demonstrated.}
    \label{fig:Projection_clustering}
\end{figure}

\subsection{Clustering projection directions to 2D asymmetric feature visualization in cryo-EM}
\label{subsec:assymetric-visulization}

In this section, we applied the proposed algorithm to cryo-EM data and demonstrated its effectiveness in addressing the symmetry mismatch issue by using a 2D asymmetric visualization method.

We used a simulated dataset of an icosahedral virus that possesses an asymmetric feature. A total of $100,000$ single-particle images were generated using RELION's \texttt{relion\_project} submodule~\cite{Scheres_2012} from the density map of the Q$\beta$-MurA complex~\cite{cui2017structures}, of which EMDB~\cite{iudin2023empiar} entry is EMD-8711. Within the Q$\beta$-MurA complex, the Q$\beta$ follows icosahedral symmetry, while the binding of MurA introduces the asymmetric feature (Figure~\ref{fig:Symmetry_Mismatch}a). Each image corresponds to a projection at a random orientation. White noise was introduced to the projection images, resulting in an SNR of -10dB (Figure~\ref{fig:Symmetry_Mismatch}a). We opted for $100,000$ images and an SNR of -10dB, as these values are representative of the typical order of magnitude observed in cryo-EM. Using the method proposed in the previous section, the simulated particle images were clustered according to their respective projection directions into $10$ clusters under the arithmetic distance, considering the $\gI$ molecular symmetry (Figure~\ref{fig:Symmetry_Mismatch}b). Cluster 1, comprising $9,850$ images, underwent 12 rounds of balanced K-means clustering using cosine similarity of particle images, resulting in $100$ clusters (Figure~\ref{fig:Symmetry_Mismatch}c). As clustering iterations progressed, the asymmetric feature (MurA) became increasingly pronounced (Figure~\ref{fig:Symmetry_Mismatch}c). Compared to RELION~\cite{Scheres_2012}, which is the mainstream image processing software in cryo-EM, where no asymmetry feature appears in its 2D clustering (often referred to as 2D classification in the cryo-EM field), our proposed method clearly demonstrates the ability to observe the asymmetry feature during the 2D clustering stage (Figure~\ref{fig:Symmetry_Mismatch}d).

\begin{figure}[htbp]
    \centering
    \includegraphics[width=\textwidth]{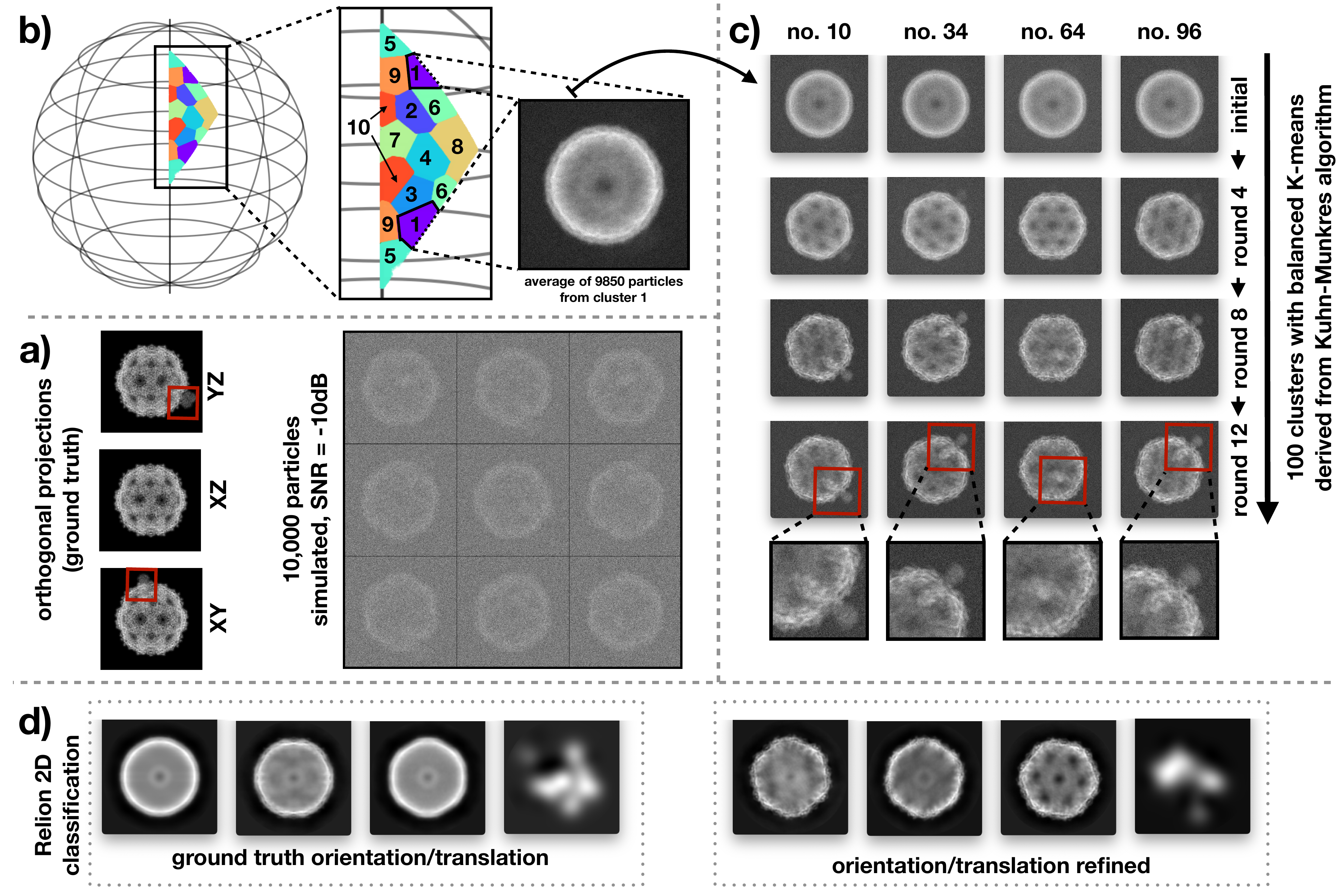}
    \caption{\textbf{Observing the asymmetry feature (MurA-binding) of the icosahedral Q$\beta$ viron via clustering of projection directions considering $\gI$ molecular symmetry group.} \textbf{a}, The density map of the Q$\beta$-MurA complex was taken from the EMDB (EMD-8711)~\cite{cui2017structures} as ground truth. Q$\beta$ follows icosahedral symmetry, while its MurA binding forms an asymmetric feature. Orthogonal projections of $YZ$, $XZ$, and $YZ$ are shown on the left, with MurA highlighted in red boxes. $100,000$ single particle images of the Q$\beta$-MurA complex were generated via RELION's \texttt{relion\_project} submodule. Each image corresponds to a projection at a random orientation. White noise was added to the projection images, making their SNR -10dB. $9$ synthetic single particle images are shown on the right. In cryo-EM, the number of images is of the order of magnitude of $10,000$, and each single particle image commonly exhibits an SNR of -10dB. \textbf{b}, Simulated single particle images were clustered by their corresponding projection directions into $10$ clusters when considering $\gI$ molecular symmetry. Different colors were used to distinguish clusters, with their respective identifiers marked. The images from cluster 1 (colored violet), which contains $9,850$ images, were selected, averaged, and then displayed. \textbf{c}, The $9,850$ images of cluster 1 underwent $12$ rounds of balanced K-means clustering into $100$ clusters, derived from the Kuhn-Munkres algorithm with image's cosine-similarity as the metric. The averages of four selected clusters out of the $100$ are depicted. The asymmetric feature (MurA), which becomes gradually more apparent as the clustering iterations proceed, is emphasized by red boxes and zoomed in for detail. \textbf{d}, The results of RELION clustering into $100$ clusters, commonly referred to as 2D classification in the field of cryo-EM and intended for comparison, were displayed, both without and with orientation refinement. For the tests without orientation refinement, ground truth orientations were imported into RELION, with no refinement of orientation performed. In contrast, for the tests with orientation refinement, RELION attempted to refine the orientation of each single particle image. In either case, averages from four out of the $100$ clusters were depicted. The first three represented the ``good'' clusters, which reflected the shape of the virus, while the last one represented the ``bad'' clusters, which constituted the majority of the $100$ categories.}
    \label{fig:Symmetry_Mismatch}
\end{figure}

\section{Conclusion and discussion}

In this work, we proposed an approximation algorithm for calculating the mean and the variance for spatial rotations and projection directions considering molecular symmetry. To solve this challenging optimization problem, we propose a relaxation method that can find the global minimum with a high probability according to numeric evidence. Finally, we demonstrate the application of our method for the generalized K-means clustering and 2D asymmetric feature visualization in cryo-EM. We release our method as an open-source Python package pySymStat (\url{https://github.com/mxhulab/pySymStat}).

Despite demonstrating good performance in experimental data, our approach does have several limitations. Firstly, the approximation of $\tilde{L}^{\SO{3}}$ to $L^{\SO{3}}$ is not sufficiently accurate, particularly for cyclic group $\gG_n$, as evidenced by the results presented in Table \ref{tab:performance_of_approximation}. Secondly, solving the SDP relaxation poses a computational bottleneck in our approach. Currently, the number of orientations cannot exceed 20. In order to obtain high-resolution asymmetric structures in cryo-EM with a large number of particles, it is crucial to develop a fast and efficient solver for handling the large-scale SDP relaxation problem outlined in equation \eqref{NUG-sdp}. This computational challenge is of utmost importance going forward. Thirdly, although our rounding process demonstrates a high rate of global optimality in our simulated experiments, a theoretical analysis is still lacking. Further research is needed to provide a rigorous theoretical understanding of the rounding procedure. Solving these problems can provide an effective solution to the symmetry mismatch issue, potentially increasing the resolution of the asymmetric unit attached to a symmetric unit.

For further discussion, we might further integrate the proposed method into some existing reconstruction algorithms in cryo-EM. In particular, the quaternion-assisted angular reconstruction algorithm~\cite{Farrow_1992, Farrow_1993} contributes to solving a series of structures at 10-30\AA\ resolution in the 90s~\cite{van_Heel_1997}. The critical step in this algorithm is solving the absolute orientation problem~\cite{Horn_1987, Harauz_1990}, which is related to determining the mean of a series of spatial rotations. In recent years, Shkolnisky and his colleagues applied an angular reconstruction algorithm into ab initio modeling in cryo-EM~\cite{Shkolnisky_2019,Shkolnisky_2020,geva2022common}, in the cases of cyclically, dihedrally, tetrahedrally and octahedrally symmetric molecules. However, such \textsl{ab initio} modeling remains unsolved for icosahedrally symmetric molecules. Therefore, it is possible to solve the aforementioned problems with the help of the proposed algorithm.

\section*{Acknowledgements}

The authors are grateful to Dr. Yichen Zhou for revising the manuscript.

\section*{Contributions}

M.H., Q.Z., C.B., and H.L. initialized the project and designed the methods. Q.Z., C.B., and H.L. clarified mathematical issues and derived formulas. M.H. and Q.Z. wrote pySymStat and validated its performance. M.H. and C.B. wrote the manuscript. The corresponding authors are CL Bao, H Lin, and MX Hu.

\bibliographystyle{plain}
\bibliography{main}

\appendix
\newpage

\section{The proof of Theorem \ref{thm:ortho-approx}}\label{sec:proof_of_so3_var_approx}

Write $L^\orthgroup(\vg_1,\cdots,\vg_N)$ and $\tilde{L}^\orthgroup(\vg_1,\cdots,\vg_N)$ as $L^\orthgroup$ and $\tilde{L}^\orthgroup$ for short, respectively. By direction computation,
\[ \begin{aligned}
    L^\orthgroup
    = & \min_{\mR\in\orthgroup}\frac{1}{N}\sum_{i=1}^N\|\mR-\mR_i\vg_i\|_F^2 \\
    = & \min_{\mR\in\orthgroup}\frac{1}{N}\sum_{i=1}^N(\|\mR\|_F^2-2\langle\mR,\mR_i\vg_i\rangle_F+\|\mR_i\vg_i\|_F^2) \\
    = & \min_{\mR\in\orthgroup}\|\mR\|_F^2-2\langle\mR,\tilde{\mR}\rangle_F+3 \\
    = & \min_{\mR\in\orthgroup}2(3-\langle\mR,\tilde{\mR}\rangle_F),
\end{aligned} \]
where $\tilde{\mR}=\frac{1}{N}\sum_{i=1}^N\mR_i\vg_i$. To find the minimizer $\bar{\mR}$ of this problem, it is equivalent to minimize $\|\bar{\mR}-\tilde{\mR}\|_F^2$, which is the well-known constrained orthogonal Procrustes problem. Let $\tilde{\mR}=\mU\bm{\Sigma}\mV^\top$ be the singular value decomposition of $\tilde{\mR}$, where $\mU,\mV\in\mathrm{O}(3)$, $\bm{\Sigma}=\operatorname{diag}(\sigma_1,\sigma_2,\sigma_3)$, $\sigma_1\geq\sigma_2\geq\sigma_3\geq0$. Then the Kabsch algorithm~\cite{Umeyama_1991} gives
\[ \bar{\mR} = \mU\operatorname{diag}(1,1,\epsilon)\mV^\top \]
where $\epsilon=\det(\mU\mV^\top)$ is the sign of $\mU\mV^\top\in\mathrm{O}(3)$. It follows that
\[ L^\orthgroup = 2(3-(\sigma_1+\sigma_2+\epsilon\sigma_3)).\]
On the other hand, notice that \eqref{variance:Euclidean} also holds for Frobenius norm of matrices, i.e.,
\[ \tilde{L}^\orthgroup = \frac{1}{N}\sum_{i=1}^N\|\tilde{\mR}-\mR_i\vg_i\|_F^2 = 3-\langle\tilde{\mR},\tilde{\mR}\rangle_F = 3-(\sigma_1^2+\sigma_2^2+\sigma_3^2). \]
The above arguments show that $L^\orthgroup$ and $\tilde{L}^\orthgroup$ are related to the singular value decomposition of $\tilde{\mR}=\frac{1}{N}\sum_{i=1}^N\mR_i\vg_i$. To characterize the relationship between $L^\orthgroup$ and $\tilde{L}^\orthgroup$, we need to vary the rotation matrices $\{\mR_{\vq_i}\}$ and the total number of points $N$ in $\tilde{\mR}$. Define the set
\begin{equation}
    \mathcal{R} = \left\{\tilde{\mR} = \frac{1}{N}\sum_{i=1}^N\mR_i\vg_i\left|\mR_i\in\orthgroup, i\in[N],  N\in\sN \right.\right\},
\end{equation}
we prove that $\mathcal{R}$ is a dense subset of $\Conv\orthgroup$, the convex hull of $\orthgroup$ in $\mathbb{R}^{3\times 3}$. On the one hand, it is simple that $\tilde{\mR}=\frac{1}{N}\sum_{i=1}^N\mR_i\vg_i \in \Conv\orthgroup$. On the other hand, for any $\mR\in\Conv\orthgroup$, suppose $\mR=\sum_{j=1}^T\mu_j\mR_j$, where $\mu_j\geq 0$ and $\sum_j\mu_j=1$. We can choose a large enough $N$ and a set of natural number $a_j$ such that $\frac{a_j}{N}\to\mu_j$. Then $\frac{1}{N}\sum_{j=1}^N\sum_{k=1}^{a_j}\mR_j \to \mR$.

Notice that $\Conv\orthgroup$ is invariant under the left/right multiplication of $\orthgroup$. Denoting $\mU'=\mU\operatorname{diag}(1,1,\det\mU),\mV'=\mV\operatorname{diag}(1,1,\det\mV)$, we have $\mU',\mV'\in\orthgroup$ and $\tilde{\mR}\in\Conv\orthgroup$ if and only if $\operatorname{diag}(\sigma_1,\sigma_2,\epsilon\sigma_3)=\mU'^\top\tilde{\mR}\mV\in\Conv\orthgroup$. Moreover, it is from from Proposition 4.1 in \cite{Sanyal_2011_Orbitopes} that the diagonal matrix ${\bf\Sigma}'=\operatorname{diag}(\sigma_1,\sigma_2,\epsilon\sigma_3)$ is in $\Conv\orthgroup$ if and only if the diagonal matrix
\[ \operatorname{diag}(
\sigma_1+\sigma_2+\epsilon\sigma_3+1,
\sigma_1-\sigma_2-\epsilon\sigma_3+1,
-\sigma_1+\sigma_2-\epsilon\sigma_3+1,
-\sigma_1-\sigma_2+\epsilon\sigma_3+1) \]
is positive semidefinite. The above condition is equivalent to
\[ \begin{cases}
    \sigma_1+\sigma_2+\epsilon\sigma_3+1 \geq 0, \\
    \sigma_1-\sigma_2-\epsilon\sigma_3+1 \geq 0,\\
    -\sigma_1+\sigma_2-\epsilon\sigma_3+1 \geq 0, \\
    -\sigma_1-\sigma_2+\epsilon\sigma_3+1 \geq 0.
\end{cases}. \]
By adding the last two inequalities, we get $\sigma_1\leq 1$. Combining it with the condition $\sigma_1\geq\sigma_2\geq\sigma_3\geq 0$ and $\epsilon=\pm 1$, the first three inequalities hold automatically. Therefore, $\tilde{\mR}\in\Conv\orthgroup$ if and only if
\begin{equation}\label{constraint-set}
    \sigma_1+\sigma_2-\epsilon\sigma_3\leq 1,\quad  1\geq\sigma_1\geq\sigma_2\geq\sigma_3\geq0,\quad\varepsilon = \pm 1.
\end{equation}
Finally, we consider the set
\begin{equation}\label{set:bound}
    \{ (L^\orthgroup=2(3-(\sigma_1+\sigma_2+\epsilon\sigma_3)), \tilde{L}^\orthgroup=3-(\sigma_1^2+\sigma_2^2+\sigma_3^2))\}\subset\sR^2
\end{equation}
subject to the constraints given in~\eqref{constraint-set}. After tedious but elementary calculations~(see details in Section 4 of the supplementary material), we know the lower bound and upper bound in Theorem~\ref{thm:ortho-approx} hold.

\section{Proof of well-definedness of the distance between two spatial rotations considering molecular symmetry}

	In this section, Equation (2.12) and (2.13) are proved to be well-defined distances, i.e., it is symmetric, positive definite, and satisfy the triangle inequality.

	As $\distort$ is symmetric,
	\[ \distortho([\mR_1],[\mR_2]) = \min_{\vg_1,\vg_2\in\gG}\distort(\mR_1\vg_1,\mR_2\vg_2) = \distortho([\mR_2],[\mR_1]) \]
	holds for all $\mR_1,\mR_2 \in \orthgroup$, which is the symmetry property.

	The positive definiteness contains two parts. The first part is that
	\[ \distortho([\mR_1],[\mR_2]) \geq 0 \]
	holds for all $\mR_1,\mR_2\in\orthgroup$, which is natural by the positive definiteness of $\distort$. The second part is that
	\[ \distortho([\mR_1],[\mR_2]) = 0 \]
	if and only if $[\mR_1]=[\mR_2]$, which is proven as follows. $[\mR_1]=[\mR_2]$ holds if and only if there exist $\vg_1,\vg_2\in\gG$ such that $\mR_1\vg_1=\mR_2\vg_2$. It is equivalent to $\distort(\mR_1\vg_1,\mR_2\vg_2) = 0$, since $\distort$ itself is positive definite. Therefore, it is further equivalent to $\distortho([\mR_1],[\mR_2]) = 0$, which completes the proof.

	The triangle inequality property is that
	\[ \distortho([\mR_1],[\mR_2]) \leq \distortho([\mR_1],[\mR_3]) + \distortho([\mR_3],[\mR_2]) \]
	holds for all $[\mR_1],[\mR_2],[\mR_3]\in\orthgroup$. It is proven as follows.
	\[ \begin{aligned}
		 & \distortho([\mR_1],[\mR_2]) \\
		= & \min_{\vg_1,\vg_2\in\gG} \distort(\mR_1\vg_1,\mR_2\vg_2) \\
		\leq & \min_{\vg_1,\vg_2,\vg_3\in\gG}\left(\distort(\mR_1\vg_1,\mR_3\vg_3) + \distort(\mR_3\vg_3,\mR_2\vg_2)\right)
	\end{aligned} \]
	because $\distort(\mR_1\vg_1,\mR_2\vg_2) \leq \distort(\mR_1\vg_1,\mR_3\vg_3) + \distort(\mR_3\vg_3,\mR_2\vg_2)$ holds for any $\vg_3 \in \gG$, as $\distort$ also has the triangle inequality property. Therefore,
	\[ \begin{aligned}
		& \distortho([\mR_1],[\mR_2]) \\
		\leq & \min_{\vg_1,\vg_2,\vg_3\in\gG}\left(\distort(\mR_1\vg_1,\mR_3\vg_3) + \distort(\mR_3\vg_3,\mR_2\vg_2)\right) \\
		= & \min_{\vg_1,\vg_2,\vg_3\in\gG}\left(\distort(\mR_1,\mR_3\vg_3\vg_1^{-1}) + \distort(\mR_3,\mR_2\vg_2\vg_3^{-1})\right) \\
		= & \min_{\vg_1',\vg_2'\in\gG}\left(\distort(\mR_1,\mR_3\vg_1') + \distort(\mR_3,\mR_2\vg_2')\right) \\
		= & \min_{\vg_1'\in\gG}\distort(\mR_1,\mR_3\vg_1') + \min_{\vg_2'\in\gG}\distort(\mR_3,\mR_2\vg_2) \\
		= & \min_{\vg_1,\vg_3\in\gG}\distort(\mR_1\vg_1,\mR_3\vg_3)+\min_{\vg_2,\vg_3\in\gG}\distort(\mR_3\vg_3,\mR_2\vg_2) \\
		= & \distortho([\mR_1],[\mR_3]) + \distortho([\mR_3],[\mR_2]).
	\end{aligned} \]

\section{Molecular symmetry groups}

As molecules are chiral in cryo-EM, there are only five classes of molecular symmetry groups. These are the cyclic group (\(\gC_n\)), dihedral group (\(\gD_n\)), tetrahedral group (\(\gT\)), octahedral group (\(\gO\)), and icosahedral group (\(\gI\)). The basic information about these groups can be found in Table 1 in~\cite{Hu_2019}. Using unit quaternions, the symmetric operations of these symmetry groups can be conveniently calculated from no more than two generators, as introduced by Conway et al.~\cite{On_Quaternions_and_Octonions}. Based on Conway's method, generators of these molecular symmetry groups under the orientation conventions in cryo-EM can be derived~\cite{Hu_2019}. The cyclic group is the simplest one and has only one generator. The other four groups require two generators.

\section{Irreducible representations of molecular symmetry groups}

	In this section, we briefly discuss unitary irreducible representations of molecular symmetry groups, including $\gC_n,\gD_n,\gT,\gO,\gI$. We will use unit quaternion description for rotations and the same setting as Table 1 \cite{Hu_2019} in the sequel. Note that every molecular symmetry group can be generated by at most two elements. To determine a representation, it is enough to give its values to the generators.

	The cyclic group $\gC_n$ is generated by an $n$-fold rotation $\sigma=(\cos\frac{\pi}{n},0,0,\sin\frac{\pi}{n})^\top$. There are $n$ irreducible reprentations of dimension $1$:
	\[ \rho_k:\sigma\mapsto e^{\frac{2\pi\sqrt{-1}k}{n}},\quad k=0,1,2,\cdots,n-1. \]
	We refer readers to section 5.1 of \cite{Serre_1977_Representation_Theory} for details.

	The dihedral group $\gD_n$ is generated by an $n$-fold rotation $\sigma=(\cos\frac{\pi}{n},0,0,\sin\frac{\pi}{n})^\top$ and a flip $\tau=(0,1,0,0)^\top$. If $n$ is odd, then there are $\frac{n+3}{2}$ irreducible representations:
	\[ \begin{aligned}
		& \rho_0:\sigma\mapsto 1,\tau\mapsto 1, \\
		& \rho_1:\sigma\mapsto 1,\tau\mapsto-1, \\
		& \rho_{k+1}:\sigma\mapsto\begin{bmatrix}\cos\frac{2\pi k}{n} & -\sin\frac{2\pi k}{n} \\ \sin\frac{2\pi k}{n} & \cos\frac{2\pi k}{n}\end{bmatrix},\tau\mapsto\begin{bmatrix}1 \\ & -1\end{bmatrix}, \quad k=1,2,\cdots,\frac{n-1}{2}.
	\end{aligned} \]
	If $n$ is even, then there are \(\frac{n}{2}+3\) irreducible representations:
	\[ \begin{aligned}
		& \rho_0:\sigma\mapsto 1,\tau\mapsto 1, \\
		& \rho_1:\sigma\mapsto 1,\tau\mapsto-1, \\
		& \rho_2:\sigma\mapsto-1,\tau\mapsto 1, \\
		& \rho_3:\sigma\mapsto-1,\tau\mapsto-1, \\
		& \rho_{k+3}:\sigma\mapsto\begin{bmatrix}\cos\frac{2\pi k}{n} & -\sin\frac{2\pi k}{n} \\ \sin\frac{2\pi k}{n} & \cos\frac{2\pi k}{n}\end{bmatrix},\tau\mapsto\begin{bmatrix}1 \\ & -1\end{bmatrix} \quad k=1,2,\cdots,\frac{n}{2}-1.
	\end{aligned} \]
	We refer readers to section 5.3 of \cite{Serre_1977_Representation_Theory} for details.

	$\gT,\gO,\gI$ are the symmetry groups of regular polytopes, and they are naturally isomorphic to $\gA_4,\gS_4,\gA_5$, respectively \cite{On_Quaternions_and_Octonions}. We refer readers to sections 5.7-5.9 of \cite{Serre_1977_Representation_Theory} and sections 2.3 and 3.1 of \cite{Fulton_2013_Representation_Theory} for the representation theory of these groups and only list the results for short. Be careful that some representations of $\gT,\gO,\gI$ in the following list are not unitary, which are not suitable for the NUG framework. We will then propose a numeric algorithm that produces an equivalent orthogonal representation of a given representation.

	The tetrahedral group $\gT$ is generated by $\sigma=(\frac{1}{2},0,0,\frac{\sqrt{3}}{2})^\top$ and $\tau=(0,0,\frac{\sqrt{6}}{3},\frac{\sqrt{3}}{3})^\top$. There are 4 irreducible representations:
	\[ \begin{aligned}
		& \rho_0:\sigma\mapsto 1,\tau\mapsto 1, \\
		& \rho_1:\sigma\mapsto e^{\frac{2\pi\sqrt{-1}}{3}},\tau\mapsto 1, \\
		& \rho_2:\sigma\mapsto e^{\frac{4\pi\sqrt{-1}}{3}},\tau\mapsto 1, \\
		& \rho_3:\sigma\mapsto \begin{bmatrix}-1 & -1 & -1 \\ 1 & 0 & 0 \\ 0 & 0 & 1\end{bmatrix},\tau\mapsto\begin{bmatrix}-1 & -1 & -1 \\ 0 & 0 & 1 \\ 0 & 1 & 0\end{bmatrix}.
	\end{aligned} \]

	The octahedral group $\gO$ is generated by $\sigma=(\frac{\sqrt{2}}{2},0,0,\frac{\sqrt{2}}{2})^\top$ and $\tau=(\frac{1}{2},\frac{1}{2},\frac{1}{2},\frac{1}{2})^\top$. There are 5 irreducible representations:
	\[ \begin{aligned}
		& \rho_0:\sigma\mapsto 1,\tau\mapsto 1, \\
		& \rho_1:\sigma\mapsto-1,\tau\mapsto 1, \\
		& \rho_2:\sigma\mapsto\begin{bmatrix}1 & 0 \\ -1 & -1\end{bmatrix},\tau\mapsto\begin{bmatrix}0 & 1 \\ -1 & -1\end{bmatrix}, \\
		& \rho_3:\sigma\mapsto\begin{bmatrix}-1 & -1 & -1 \\ 1 & 0 & 0 \\ 0 & 1 & 0\end{bmatrix},\tau\mapsto\begin{bmatrix}0 & 1 & 0 \\ -1 & -1 & -1 \\ 0 & 0 & 1\end{bmatrix}, \\
		& \rho_4:\sigma\mapsto\begin{bmatrix}1 & 1 & 1 \\ -1 & 0 & 0 \\ 0 & -1 & 0\end{bmatrix},\tau\mapsto\begin{bmatrix}0 & 1 & 0 \\ -1 & -1 & -1 \\ 0 & 0 & 1\end{bmatrix}.
	\end{aligned} \]

	The icosahedral group $\gO$ is generated by $\sigma=(0,0,0,1)^\top$ and $\tau=(\frac{1}{2},0,\frac{\sqrt{5}-1}{4},\frac{\sqrt{5}+1}{4})^\top$. There are 5 irreducible representations:
	\[ \begin{aligned}
		& \rho_0:\sigma\mapsto 1,\tau\mapsto 1, \\
		& \rho_1:\sigma\mapsto\begin{bmatrix}-1 & -1 & -1 & -1 \\ 0 & 0 & 1 & 0 \\ 0 & 1 & 0 & 0 \\ 0 & 0 & 0 & 1\end{bmatrix},\tau\mapsto\begin{bmatrix}0 & 0 & 0 & 1 \\ 1 & 0 & 0 & 0 \\ 0 & 0 & 1 & 0 \\ 0 & 1 & 0 & 0\end{bmatrix}, \\
		& \rho_2:\sigma\mapsto\begin{bmatrix}0 & 1 & 0 & 0 & 0 \\ 1 & 0 & 0 & 0 & 0 \\ 0 & 0 & 1 & 0 & 0 \\ -1 & -1 & 0 & -1 & 0 \\ 0 & 1 & 0 & 1 & 1\end{bmatrix},\tau\mapsto\begin{bmatrix}0 & 0 & 1 & 0 & 0 \\ 0 & -1 & -1 & -1 & -1 \\ -1 & 0 & 0 & 0 & 1 \\ 0 & 1 & 0 & 0 & 0 \\ 1 & 0 & 1 & 0 & 0\end{bmatrix}, \\
		& \rho_{k+2}:\sigma\mapsto\begin{bmatrix}-x_k & 1 & -x_k \\ x_k & x_k & -1 \\ 0 & 0 & -1\end{bmatrix},\tau\mapsto\begin{bmatrix}0 & 1+x_k & -1-x_k \\ -1 & -1 & x_k \\ -x_k & -x_k & 1\end{bmatrix} \quad k=1,2. \\
	\end{aligned} \]
	where $x_1,x_2=\frac{-1\pm\sqrt{5}}{2}$.

	Finally, we briefly describe how to find an equivalent unitary representation for a given representation. Let $\gG$ be a finite group and $\rho:\gG\to\operatorname{GL}(n,\sC)$ be a representation. Let $\mA=\sum_{\vg\in\gG}\rho(\vg)^\HT\rho(\vg)$ and find its Cholesky decomposition $\mA=\mP^\HT\mP$. Then $\mP\rho(\vg)\mP^{-1}$ is unitary for all $\vg\in\gG$. We refer readers to proposition 1.5 in~\cite{Fulton_2013_Representation_Theory} for more details.

\section{Evaluation of the set in Appendix 1}

	Our goal is to find the set
	\[ \begin{aligned}
		\{ (L^\orthgroup, \tilde{L}^\orthgroup) \big|
		& L^\orthgroup=2(3-(\sigma_1+\sigma_2+\epsilon\sigma_3)), \\
		& \tilde{L}^\orthgroup=3-(\sigma_1^2+\sigma_2^2+\sigma_3^2) \\
		& 1\geq\sigma_1\geq\sigma_2\geq\sigma_3\geq0, \\
		& \sigma_1+\sigma_2-\epsilon\sigma_3\leq 1, \\
		& \epsilon=\pm1, \}
	\end{aligned} \]
	This is reduced to evaluate
	\[ \begin{aligned}
		\{ (\sigma_1+\sigma_2+\epsilon\sigma_3,\sigma_1^2+\sigma_2^2+\sigma_3^2) \big|
		& 1\geq\sigma_1\geq\sigma_2\geq\sigma_3\geq0, \\
		& \sigma_1+\sigma_2-\epsilon\sigma_3\leq 1, \\
		& \epsilon=\pm1 \}.
	\end{aligned} \]

	In the case that $\epsilon=1$, the range of $\sigma_1 + \sigma_2 + \sigma_3$ is $[0,3]$. When $\sigma_1+\sigma_2+\sigma_3=a$, the minimum of $\sigma_1^2+\sigma_2^2+\sigma_3^2$ is $\frac{a^2}{3}$ when $\sigma_1=\sigma_2=\sigma_3=\frac{a}{3}$. In order to reach the maximum, $\sigma_1$ should be as large as possible. If $a\leq 1$, then $\sigma_1=a$ and it follows $\sigma_2=\sigma_3=0$, and the maximum value is $a^2$. If $1\leq a\leq 3$, then $\sigma_1=1$. Since $\sigma_1+\sigma_2-\sigma_3\leq 1$, we have $\sigma_2\leq\sigma_3$. Then $\sigma_2=\sigma_3=\frac{a-1}{2}$ follows. The maximum is $1^2+2(\frac{a-1}{2})^2=\frac{1}{2}a^2-a+\frac{3}{2}$. In conclusion, when $\epsilon=1$, the range of all possible $(\sigma_1+\sigma_2+\epsilon\sigma_3,\sigma_1^2+\sigma_2^2+\sigma_3^2)$ is
	\[ \begin{aligned}
		& \{(a,b)|0\leq a\leq 1,\frac{a^2}{3}\leq b\leq a^2\} \cup \\
		& \{(a,b)|1\leq a\leq 3,\frac{a^2}{3}\leq b\leq \frac{1}{2}a^2-a+\frac{3}{2}\}.
	\end{aligned} \]

	In the case that $\epsilon=-1$, $\sigma_1+\sigma_2-\sigma_3\leq\sigma_1+\sigma_2+\sigma_3\leq 1$. Let $\sigma_1+\sigma_2-\sigma_3=a$ where $a\in[0,1]$, and the minimum and the maximum of $\sigma_1^2+\sigma_2^2+\sigma_3^2$ are to be investigated. $\sigma_3=0$, in order to reach the minimum. Otherwise, $\sigma_2$ can be decreased along with $\sigma_3$. Therefore, $\sigma_1=\sigma_2=\frac{a}{2}$, and the minimum is $\frac{a^2}{2}$.

	Next, we need to find the maximum. We first note that if $\sigma_1+\sigma_2+\sigma_3<1$, then $\sigma_1=\sigma_2=\sigma_3$ must hold, otherwise we can increase $\sigma_1,\sigma_3$ simultaneously if $\sigma_3<\sigma_2$ or increase $\sigma_2,\sigma_3$ simultaneously if $\sigma_2<\sigma_1$. If $\sigma_1=\sigma_2=\sigma_3$, then by $\sigma_1+\sigma_2-\sigma_3=a$ they are all equal to $a$. This case can happen if and only if $\sigma_1+\sigma_2+\sigma_3\leq 1$, i.e., $a\leq\frac{1}{3}$. In this case, the maximum is $3a^2$. Otherwise, we have $\sigma_1+\sigma_2+\sigma_3=1$. It follows that $\sigma_1+\sigma_2=\frac{1+a}{2}$ and $\sigma_3=\frac{1-a}{2}$. We note that this case is only possible when the maximum value of $\sigma_2$, i.e., $\frac{1+a}{4}$, is greater than or equal to $\sigma_3=\frac{1-a}{2}$. Hence $a\geq\frac{1}{3}$. Moreover, to achieve maximum in this case, $\sigma_2$ should be as small as possible. The smallest possible value of $\sigma_2$ is $\sigma_3=\frac{1-a}{2}$, and $\sigma_1=a$ follows. Note that $\sigma_1\geq\sigma_2$ holds in this case. Therefore, the maximum is $a^2+2(\frac{1-a}{2})^2=\frac{3}{2}a^2-a+\frac{1}{2}$. In conclusion, when $\epsilon=-1$, the range of all possible $(\sigma_1+\sigma_2+\epsilon\sigma_3,\sigma_1^2+\sigma_2^2+\sigma_3^2)$ is
	\[ \begin{aligned}
		& \{(a,b)|0\leq a\leq\frac{1}{3}, \frac{a^2}{2}\leq b\leq 3a^2\} \cup \\
		& \{(a,b)|\frac{1}{3}\leq a\leq 1,\frac{a^2}{2}\leq b\leq \frac{3}{2}a^2-a+\frac{1}{2}\}.
	\end{aligned} \]
	Combining the previous results we obtain
	\[ \begin{aligned}
		& \{(\sigma_1+\sigma_2+\epsilon\sigma_3,\sigma_1^2+\sigma_2^2+\sigma_3^2)\} \\
		= & \{(a,b)|0\leq a\leq\frac{1}{3}, \frac{a^2}{3}\leq b\leq 3a^2\} \cup \\
		& \{(a,b)|\frac{1}{3}\leq a\leq 1,\frac{a^2}{3}\leq b\leq \frac{3}{2}a^2-a+\frac{1}{2}\} \cup \\
		& \{(a,b)|1\leq a\leq 3,\frac{a^2}{3}\leq b\leq \frac{1}{2}a^2-a+\frac{3}{2}\}.
	\end{aligned} \]
	Put this range into original expression and the result follows.

\section{Sensitivity analysis of hyperparameters $m$ and $c$}

	In our rounding algorithm, we have identified two hyperparameters: the capacity $m$ and the threshold $c$. In Section 4.1.2 and Table 4, we demonstrate how the performance of our rounding method changes with varying $m$ and $c$. However, due to space limitations, we could only present a subset of the results in the previous section. To provide a more comprehensive analysis, we conducted additional experiments with a wider range of hyperparameter values.

	Specifically, in this section, we present the detailed results of our experiments. We tested the values of
    \begin{align}
        m & \in \{4, 8, 12, 16, 20, 24, 28, 32, 36, 40\}, \\
        c & \in \{0, 0.1, 0.2, 0.3, 0.4, 0.5, 0.6, 0.7, 0.8, 0.9\}.
    \end{align}
    The comprehensive results are provided in the following two tables.

	\begin{landscape}
		\begin{table}[!htbp]
			\centering
			\caption{The (Accuracy, Maximal RCG-NUG) results for varying $m$ under the $d_\orthgroup^A$.}
			\begin{tabular}{c|ccccc}
				\toprule
				\multirow{2}*{Group $\gG$} & \multicolumn{5}{c}{$c=0.99$} \\
				& $m=4$ & $m=8$ & $m=12$ & $m=16$ & $m=20$ \\
				\midrule
				$\gC_2$ & (98.0\%, 0.82\%) & (98.1\%, 0.82\%) & (98.1\%, 0.82\%) & (98.1\%, 0.82\%) & (98.1\%, 0.82\%) \\
				$\gC_7$ & (97.9\%, 8.27\%) & (99.4\%, 2.16\%) & (99.8\%, 0.50\%) & (99.9\%, 0.22\%) & (99.9\%, 0.22\%) \\
				$\gD_2$ & (99.9\%, 0.13\%) & (99.9\%, 0.13\%) & (99.9\%, 0.13\%) & (99.9\%, 0.13\%) & (99.9\%, 0.13\%) \\
				$\gD_7$ & (99.8\%, 1.62\%) & (100\%,     0\%) & (100\%,     0\%) & (100\%,     0\%) & (100\%,     0\%) \\
				$\gT$   & (100\%,     0\%) & (100\%,     0\%) & (100\%,     0\%) & (100\%,     0\%) & (100\%,     0\%) \\
				$\gO$   & (100\%,     0\%) & (100\%,     0\%) & (100\%,     0\%) & (100\%,     0\%) & (100\%,     0\%) \\
				$\gI$   & (100\%,     0\%) & (100\%,     0\%) & (100\%,     0\%) & (100\%,     0\%) & (100\%,     0\%) \\
				\midrule
				\multirow{2}*{Group $\gG$} & \multicolumn{5}{c}{$c=0.99$} \\
				& $m=24$ & $m=28$ & $m=32$ & $m=36$ & $m=40$ \\
				\midrule
				$\gC_2$ & (98.1\%, 0.82\%) & (98.1\%, 0.82\%) & (98.1\%, 0.82\%) & (98.1\%, 0.82\%) & (98.1\%, 0.82\%) \\
				$\gC_7$ & (99.9\%, 0.22\%) & (100\%,     0\%) & (100\%,     0\%) & (100\%,     0\%) & (100\%,     0\%) \\
				$\gD_2$ & (99.9\%, 0.13\%) & (99.9\%, 0.13\%) & (99.9\%, 0.13\%) & (99.9\%, 0.13\%) & (99.9\%, 0.13\%) \\
				$\gD_7$ & (100\%,     0\%) & (100\%,     0\%) & (100\%,     0\%) & (100\%,     0\%) & (100\%,     0\%) \\
				$\gT$   & (100\%,     0\%) & (100\%,     0\%) & (100\%,     0\%) & (100\%,     0\%) & (100\%,     0\%) \\
				$\gO$   & (100\%,     0\%) & (100\%,     0\%) & (100\%,     0\%) & (100\%,     0\%) & (100\%,     0\%) \\
				$\gI$   & (100\%,     0\%) & (100\%,     0\%) & (100\%,     0\%) & (100\%,     0\%) & (100\%,     0\%) \\
				\bottomrule
			\end{tabular}
		\end{table}
	\end{landscape}

	\begin{landscape}
		\begin{table}[!htbp]
			\centering
			\caption{The (Accuracy, Maximal RCG-NUG) results for varying $c$ under the $d_\orthgroup^A$.}
			\begin{tabular}{c|ccccc}
				\toprule
				\multirow{2}*{Group $\gG$} & \multicolumn{5}{c}{$m=20$} \\
				& $c=0$ & $c=0.1$ & $c=0.2$ & $c=0.3$ & $c=0.4$ \\
				\midrule
				$\gC_2$ & (89.6\%,  2.40\%) & (89.6\%,  2.40\%) & (89.6\%,  2.40\%) & (89.6\%,  2.40\%) & (89.6\%,  2.40\%) \\
				$\gC_7$ & (95.3\%,  2.48\%) & (95.3\%,  2.48\%) & (95.3\%,  2.48\%) & (95.3\%,  2.48\%) & (95.4\%,  2.48\%) \\
				$\gD_2$ & (94.6\%,  8.74\%) & (94.6\%,  8.74\%) & (94.6\%,  8.74\%) & (94.6\%,  8.74\%) & (94.6\%,  8.74\%) \\
				$\gD_7$ & (96.5\%, 17.62\%) & (96.5\%, 17.62\%) & (96.5\%, 17.62\%) & (96.5\%, 17.62\%) & (96.5\%, 17.62\%) \\
				$\gT$   & (98.2\%, 33.32\%) & (98.2\%, 33.32\%) & (98.2\%, 33.32\%) & (98.2\%, 33.32\%) & (98.2\%, 33.32\%) \\
				$\gO$   & (99.2\%,  6.89\%) & (99.2\%,  6.89\%) & (99.2\%,  6.89\%) & (99.2\%,  6.89\%) & (99.2\%,  6.89\%) \\
				$\gI$   & (99.7\%,  2.00\%) & (99.7\%,  2.00\%) & (99.7\%,  2.00\%) & (99.7\%,  2.00\%) & (99.7\%,  2.00\%) \\
				\midrule
				\multirow{2}*{Group $\gG$} & \multicolumn{5}{c}{$m=20$} \\
				& $c=0.5$ & $c=0.6$ & $c=0.7$ & $c=0.8$ & $c=0.9$ \\
				\midrule
				$\gC_2$ & (89.6\%,  2.40\%) & (89.6\%,  2.40\%) & (89.6\%,  2.40\%) & (90.3\%, 1.30\%) & (93.7\%, 1.30\%) \\
				$\gC_7$ & (95.7\%,  2.48\%) & (97.0\%,  2.24\%) & (98.3\%,  0.88\%) & (99.1\%, 0.40\%) & (99.7\%, 0.40\%) \\
				$\gD_2$ & (94.6\%,  8.74\%) & (94.7\%,  8.74\%) & (95.6\%,  7.72\%) & (97.4\%, 2.85\%) & (99.5\%, 1.14\%) \\
				$\gD_7$ & (96.8\%, 13.11\%) & (97.4\%, 13.11\%) & (98.3\%, 12.51\%) & (99.3\%, 0.84\%) & (99.4\%, 0.60\%) \\
				$\gT$   & (98.2\%, 33.32\%) & (99.0\%,  7.28\%) & (99.8\%,  2.24\%) & (99.9\%, 1.81\%) & (99.9\%, 1.81\%) \\
				$\gO$   & (99.2\%,  6.89\%) & (99.8\%,  0.18\%) & (100\%,      0\%) & (100\%,     0\%) & (100\%,     0\%) \\
				$\gI$   & (99.7\%,  2.00\%) & (99.8\%,  2.00\%) & (100\%,      0\%) & (100\%,     0\%) & (100\%,     0\%) \\
				\bottomrule
			\end{tabular}
		\end{table}
	\end{landscape}

\end{document}